\newtheorem{theorem}{Theorem}[section]
\newtheorem{lemma}[theorem]{Lemma}
\newtheorem{claim}[theorem]{Claim}
\newtheorem{cor}[theorem]{Corollary}
\author{\ \\ \\
Vikram Kamat \thanks{\texttt{vkamat@csa.iisc.ernet.in}}
\ and
Neeldhara Misra \thanks{\texttt{mail@neeldhara.com}}\\
{\small Department of Computer Science \& Automation,}\\
{\small Indian Institute of Science, Bangalore -- 560 012, India.}\\ \\ \\
}
\title{An Erd\H{o}s--Ko--Rado theorem for matchings in the complete graph}
\begin{document}

\maketitle

\begin{abstract}
We consider the following higher-order analog of the Erd\H{o}s--Ko--Rado theorem. For positive integers $r$ and $n$ with $r\leq n$, let $\mathcal{M}^r_n$ be the family of all matchings of size $r$ in the complete graph $K_{2n}$. For any edge $e\in E(K_{2n})$, the family $\mathcal{M}^r_n(e)$, which consists of all sets in $\mathcal{M}^r_n$ containing $e$ is called the star centered at $e$. We prove that if $r<n$ and $\mathcal{A}\subseteq \mathcal{M}^r_n$ is an intersecting family of matchings, then $|\mathcal{A}|\leq |\mathcal{M}^r_n(e)|$, where $e\in E(K_{2n})$. We also prove that equality holds if and only if $\mathcal{A}$ is a star. The main technique we use to prove the theorem is an analog of Katona's elegant cycle method.


\noindent{\bf Key words.}
intersecting family, star, matchings.
\end{abstract}

\section{Introduction}

For a positive integer $n$, let $[n]=\{1,2,\ldots,n\}$ and let ${[n] \choose r}$ be the family of all $r$-subsets of $[n]$. We say that a family $\mathcal{F}\subseteq {[n] \choose r}$ is $t$-intersecting if for any two sets in $A, B\in \mathcal{F}$, $|A\cap B|\geq t$. We say that a family is \textit{intersecting} if it is $1$-intersecting. One of the seminal results in extremal set theory, due to Erd\H{o}s, Ko and Rado gives a best-possible upper bound of the size of \textit{uniform} intersecting families and also characterizes the structures which attain this bound.
\begin{theorem}[Erd\H{o}s--Ko--Rado]\label{ekr}
Let $\mathcal{F}\subseteq {[n] \choose r}$ be intersecting. If $r\leq n/2$, then $|\mathcal{F}|\leq {n-1 \choose r-1}$. If $r<n/2$, then equality holds if and only if $\mathcal{F}=\{A\in {[n] \choose r}:x\in A\}$, for $x\in [n]$.
\end{theorem}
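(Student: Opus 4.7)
The plan is to use Katona's cyclic permutation method. Let $\Sigma$ be the set of all $n!$ cyclic orderings of $[n]$, where a cyclic ordering is a linear arrangement $(x_1, x_2, \ldots, x_n)$ with positions read modulo $n$. For $\sigma \in \Sigma$, call an $r$-subset $A \subseteq [n]$ an \emph{arc} of $\sigma$ if its elements occupy $r$ consecutive positions of $\sigma$; each $\sigma$ has exactly $n$ arcs. I would bound $|\mathcal{F}|$ by double counting the pairs $(\sigma, A)$ with $\sigma \in \Sigma$, $A \in \mathcal{F}$, and $A$ an arc of $\sigma$.

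The crux is the following cyclic lemma: if $\mathcal{F}$ is intersecting and $r \leq n/2$, then every $\sigma \in \Sigma$ contains at most $r$ arcs from $\mathcal{F}$. To prove it, fix $\sigma$ and suppose $A_0 \in \mathcal{F}$ is one of its arcs; after relabelling positions, take $A_0 = \{1, \ldots, r\}$. The $2(r-1)$ other arcs of $\sigma$ meeting $A_0$ are those starting at positions $2, \ldots, r$ (meeting $A_0$ from the right) and $n-r+2, \ldots, n$ (from the left). Pair the arc starting at $1+j$ with the arc starting at $n-r+1+j$ for $j = 1, \ldots, r-1$; the hypothesis $2r \leq n$ guarantees that the two arcs in each such pair are disjoint, so at most one of them can lie in the intersecting family $\mathcal{F}$. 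Together with $A_0$, this gives at most $1 + (r-1) = r$ arcs of $\sigma$ in $\mathcal{F}$, as claimed.

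For the count, every $A \in \mathcal{F}$ is an arc of exactly $n \cdot r!(n-r)!$ orderings in $\Sigma$ (choose the starting position of $A$, then order $A$ internally, and order $[n] \setminus A$ internally). Combining this with the cyclic lemma,
\[
|\mathcal{F}| \cdot n \cdot r!(n-r)! \;\leq\; r \cdot n!,
\]
which simplifies to $|\mathcal{F}| \leq \binom{n-1}{r-1}$.

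The main obstacle is the uniqueness statement in the range $r < n/2$. Equality in the double count forces every $\sigma \in \Sigma$ to contain exactly $r$ arcs of $\mathcal{F}$, and the strict inequality $r < n/2$ provides genuine slack in the pairing: the two arcs of each pair are separated by at least one unused position. I would then argue that in every $\sigma$ the $r$ arcs of $\mathcal{F}$ must share a common element, and promote this local observation to a global one (that all of $\mathcal{F}$ contains a common element) either via a shifting / compression reduction to the case of a left-shifted intersecting family, for which extremality is readily shown to force star structure, or by invoking a Hilton--Milner-style stability bound asserting that any intersecting family not contained in a star has size strictly less than $\binom{n-1}{r-1}$ when $r < n/2$. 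The cyclic bound itself drops out almost mechanically; the star-structure characterization is the genuinely delicate step.
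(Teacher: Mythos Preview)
The paper does not actually prove Theorem~\ref{ekr}; it is stated as classical background and attributed to \cite{ekr}, with Katona's cycle proof cited as \cite{katona}. So there is no ``paper's own proof'' against which to compare your attempt. That said, your approach is precisely Katona's original argument, and it is also the template the authors adapt in Section~\ref{main} to prove their main result (Theorem~\ref{mainthm}) for matchings: a class of cyclic-type orderings, a lemma bounding the number of intervals from the family in any single ordering by $r$, and a double count. Your bound argument is correct as written.

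Where your write-up is genuinely incomplete is the uniqueness step, and you are candid about this. Note, however, that neither of the two routes you propose is really in the spirit of a self-contained Katona-style proof: invoking Hilton--Milner is circular in practice (that result is at least as hard as EKR uniqueness), and switching to shifting abandons the cycle method entirely. The standard way to finish within the cycle framework---and the one the paper mirrors for its matching analog in Lemmas~\ref{relabel}--\ref{transpositions}---is to argue directly from saturation: first show that in a saturated cyclic order the $r$ arcs of $\mathcal{F}$ must be exactly the $r$ arcs through some single position (this uses $r<n/2$ to get the extra gap you noted), and then show that an adjacent transposition of the cyclic order preserves the common element, so that all orderings share the same center. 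Filling this in would make your proof both complete and methodologically aligned with what the paper does for its own theorem.
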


In this paper, we are concerned with a higher-order analog of Theorem \ref{ekr}. Informally, a higher-order extremal problem is one in which the elements of a family are families of sets (as opposed to sets). A good overview of higher-order extremal problems is provided by Ahlswede, Cai and Zhang \cite{acz}. P.L. Erd\H{o}s and Sz\'ekely \cite{esz} also consider higher-order Erd\H{o}s--Ko--Rado theorems. One of the problems considered in that paper is the case in which every set system is a set partition and an Erd\H{o}s--Ko--Rado type result for $t$-intersecting systems of set partitions of $[n]$ is proved for sufficiently large $n$. Meagher and Moura \cite{mm} consider the problem for $t$-intersecting systems of \textit{uniform} $k$-partitions of $[n]$, i.e. set partitions with $k$ blocks where each block has $n/k$ elements, and prove an Erd\H{o}s--Ko--Rado theorem when $n$ is sufficiently large. In addition, they also establish a result for intersecting uniform set partitions for all $n$.

In this paper, we consider intersecting families of \textit{partial} uniform set partitions. In particular, we restrict our attention to the case when each block of the partition has two elements. It is convenient to think of this problem in terms of intersecting families of matchings in the complete graph on an even number of vertices. Before we state our main result, we introduce some notation.

For $n\geq 1$, let $K_{2n}$ be the complete graph on $2n$ vertices with vertex set $V=V(K_{2n})=[2n]$ and edge set $E=E(K_{2n})={[2n] \choose 2}$. For $r\leq n$, let $\mathcal{M}^r_n$ be the family of all matchings in $K_{2n}$ with $r$ edges (henceforth, we call them $r$-matchings). Analogous to the notion of intersection for set systems, we say that $\mathcal{A}\subseteq \mathcal{M}^r_n$ is intersecting if any two $r$-matchings in $\mathcal{A}$ share at least one edge. For an edge $e\in E$, let $\mathcal{M}^r_n(e)=\{A\in \mathcal{M}^r_n:e\in A\}$, called a star centered at edge $e$. Let $\chi(n,r)=\dfrac{{2n \choose 2}\ldots{2n-2(r-1) \choose 2}}{r!}$ and $\phi(n,r)=\dfrac{{2n-2 \choose 2}\ldots{2n-2(r-1) \choose 2}}{(r-1)!}$. Note that $\phi(n,r)=\dfrac{r}{{2n \choose 2}}\chi(n,r)$. It is not hard to see that $|\mathcal{M}^r_n|=\chi(n,r)$ and for any $e\in E$, $|\mathcal{M}^r_n(e)|=\phi(n,r)$. We can now state our main result.
\begin{theorem}\label{mainthm}
For $r<n$, if $\mathcal{A}\subseteq \mathcal{M}^r_n$ is an intersecting family of $r$-matchings, then $|\mathcal{A}|\leq \phi(n,r)$ with equality holding if and only if $\mathcal{A}=\mathcal{M}^r_n(e)$ for $e\in E$.
\end{theorem}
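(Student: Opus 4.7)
I would develop an analog of Katona's cycle method: for each cyclic arrangement $\pi = (v_1, \ldots, v_{2n})$ of the $2n$ vertices of $K_{2n}$, associate the family of $2n$ \emph{cyclic $r$-matchings} $\mathcal{C}(\pi) = \{M_i(\pi) : i \in [2n]\}$, where $M_i(\pi) = \{\{v_{i+2k}, v_{i+2k+1}\} : 0 \le k \le r-1\}$ with indices read modulo $2n$. The scheme is a double-counting of pairs $(\pi, M)$ with $M \in \mathcal{A} \cap \mathcal{C}(\pi)$: summing over $M \in \mathcal{A}$, each fixed matching lies in $\mathcal{C}(\pi)$ for exactly $r! \cdot 2^r \cdot (2n-2r)!$ cyclic arrangements, so the total is $|\mathcal{A}| \cdot r! \cdot 2^r \cdot (2n-2r)!$; summing over $\pi$, the total is controlled via a per-arrangement estimate.

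The first step is to analyse the intersection pattern within $\mathcal{C}(\pi)$. Direct inspection shows that $M_i(\pi)$ and $M_j(\pi)$ share an edge iff $i - j$ is even and $|i - j| \le 2(r-1)$ cyclically modulo $2n$. Hence the intersection graph on $\mathcal{C}(\pi)$ is the disjoint union of two circulant graphs on odd- and even-indexed positions, and its maximum cliques correspond to arcs of same-parity positions of diameter at most $r-1$ in $\mathbb{Z}_n$. In the generic regime where such an arc does not wrap around, the $r$ matchings forming a maximum clique share a common \emph{pivot} edge (the middle consecutive pair of the window), and more generally if $\mathcal{A} \cap \mathcal{C}(\pi)$ is non-empty one obtains a candidate pivot edge $e(\pi)$ that is a consecutive pair in $\pi$.

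The main step is to exploit this structural understanding to establish the sharp bound $|\mathcal{A}| \le \phi(n, r)$. The raw clique bound ``$|\mathcal{A} \cap \mathcal{C}(\pi)| \le r$'' only delivers $|\mathcal{A}| \le \tfrac{2n-1}{2}\,\phi(n, r)$, which is off by a factor of $\tfrac{2n-1}{2}$. To close the gap I would reorganise the sum by pivot edges: each fixed edge $e$ is a consecutive pair in exactly $2(2n-2)!$ cyclic arrangements, and the intersecting property of $\mathcal{A}$ should prevent distinct arrangements from contributing via mutually incompatible pivot choices, so that the total contribution matches $\phi(n, r) \cdot r! \cdot 2^r \cdot (2n-2r)! = 2r(2n-2)!$. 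This reorganisation is the main obstacle: one must pin down a canonical pivot per contributing arrangement and control the aggregate, with special care in the wraparound regime (e.g.\ $n$ small relative to $r$) where a maximal clique in $\mathcal{C}(\pi)$ need not determine a unique pivot edge.

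For the equality characterisation, $|\mathcal{A}| = \phi(n, r)$ forces equality at every stage: each contributing arrangement attains the clique maximum through a pivot edge structure. A consistency argument then completes the proof: were the pivot edges $e(\pi), e(\pi')$ associated to two arrangements distinct, intersectingness of $\mathcal{A}$ combined with the pivot structure would produce a pair of matchings in $\mathcal{A}$ sharing no edge, contradicting the hypothesis (or forcing $|\mathcal{A}| < \phi(n, r)$). Hence there is a single edge $e_0$ with $\mathcal{A} \subseteq \mathcal{M}^r_n(e_0)$, and since $|\mathcal{A}| = \phi(n, r) = |\mathcal{M}^r_n(e_0)|$ we conclude $\mathcal{A} = \mathcal{M}^r_n(e_0)$.
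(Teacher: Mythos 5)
Your double-counting scheme with cyclic arrangements of the $2n$ vertices is a natural first attempt, but as you yourself observe, the raw Katona clique bound $|\mathcal{A}\cap\mathcal{C}(\pi)|\le r$ (which, incidentally, already fails once $r>n/2$: when $r>n/2$ all $n$ matchings of a given parity class pairwise intersect, e.g.\ $n=3$, $r=2$ where $M_0,M_2,M_4$ pairwise intersect with no common edge) only delivers $|\mathcal{A}|\le\tfrac{2n-1}{2}\phi(n,r)$. The ``reorganisation by pivot edges'' that you propose to close this $\tfrac{2n-1}{2}$ gap is not carried out, and the obstructions you name (wraparound cliques with no unique pivot, aggregating contributions across arrangements sharing an incompatible pivot) are genuine and do not obviously resolve. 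This is a gap, not merely a deferred routine verification.

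The missing idea is to replace the $2n$-element cycle of vertices by a much richer cyclic object. The paper builds, for each $\sigma\in S_{2n}$, a \emph{rooted Baranyai order}: a decomposition of $E(K_{2n})$ into the $2n-1$ perfect matchings $M_\sigma(1),\dots,M_\sigma(2n-1)$ of a Baranyai (one-factorisation) partition, each part internally ordered, concatenated into a single cyclic order $\psi_\sigma$ on all $\binom{2n}{2}$ edges. The crucial structural fact (Claim~\ref{goodness}) is that \emph{every} interval of length at most $n-1$ in $\psi_\sigma$ is a matching. Thus each permutation supplies $n(2n-1)$ candidate $r$-matchings rather than $2n$, Katona's bound $|\mathcal{A}_\sigma|\le r$ applies cleanly since $r\le n-1\ll n(2n-1)/2$, and the count $q_A=n(2n-1)\,r!\,2^r\,(2n-2r)!$ of permutations compatible with a fixed $A$ makes the double count close exactly at $|\mathcal{A}|\le\phi(n,r)$ with no slack and no pivot reorganisation needed. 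In your scheme, the $r$-matchings compatible with $\pi$ are confined to $2r$-windows of consecutive vertices, whereas in the Baranyai order they range over all $n(2n-1)$ consecutive-edge windows; that extra factor of $\tfrac{n(2n-1)}{2n}=\tfrac{2n-1}{2}$ is precisely what your bound is missing. The equality case in the paper also proceeds differently: rather than a pivot-consistency argument over cyclic vertex-arrangements, it shows (via Lemmas~\ref{relabel}, \ref{swaps}, \ref{transpositions}) that if every $\sigma$ is saturated then adjacent transpositions preserve the centre edge, so all permutations are centred at a single edge $e$, forcing $\mathcal{A}=\mathcal{M}^r_n(e)$.
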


We note that the case $r=n$ is settled (as part of a stronger theorem for uniform set partitions) by Meagher and Moura \cite{mm}. To prove Theorem \ref{mainthm}, we use an analog of Katona's cycle method \cite{katona}. As in Katona's original proof of the Erd\H{o}s--Ko--Rado theorem, the main challenge is to come up with a class of objects over which to carry out the double counting argument. We will use the notion of Baranyai partitions to construct these objects.

The rest of the paper is organized as follows. In Section \ref{baranyai}, we describe the class of objects used in the Katona-style argument and prove a crucial property about this class that enables us to obtain the required bound in Theorem \ref{mainthm}. In Section \ref{main}, we give a proof of Theorem \ref{mainthm} that includes the tight upper bound and characterization of the extremal structures.

\section{Baranyai Partitions}\label{baranyai}


A \textit{Baranyai partition} of the complete graph $K_{2n}$ is a partitioning of its edge-set $E$ into ($2n-1$) perfect matchings. We first describe a well-known construction for a Baranyai partition of $K_{2n}$. For any $i\in [2n-1]$ and $1\leq j\leq n-1$, let $e^j(i)=\{i+j,i-j+2n-1\}$\footnote{Addition is carried out modulo $2n-1$, so $a+b$ is either $a+b$ or $a+b-(2n-1)$, depending on which lies in $[2n-1]$.}, i.e. the edge between the vertices $i+j$ and $i-j+2n-1$. Also, let $e^0(i)=\{i,2n\}$. Finally, for each $1\leq i\leq 2n-1$, let $M(i)=\{e^1(i),e^2(i),\ldots,e^{n-1}(i),e^0(i)\}$. Now $\mathcal{B}=\{M(1),\ldots,M(2n-1)\}$, is a Baranyai partition of $K_{2n}$.


We define a \textit{rooted} Baranyai partition as one with a specific vertex $i\in [2n]$ identified as the root of the partition. (We call it an $i$-rooted Baranyai partition.) We also define a \textit{rooted Baranyai order} (henceforth referred to as a \textit{rooted order}) as an ordering of the parts of a rooted Baranyai partition. (Note that the individual parts themselves are still considered as unordered sets, though we will later work with a fixed ordering of the individual parts.) Now, using the construction described above, we construct a rooted order corresponding to each permutation $\sigma\in S_{2n}$. Let $\sigma=(\sigma(1),\ldots,\sigma(2n))\in S_{2n}$ be a permutation of $[2n]$. For any $i\in [2n-1]$ and $1\leq j\leq n-1$, let $e_{\sigma}^j(i)=\{\sigma(i+j),\sigma(i-j+2n-1)\}$, the edge between the two vertices $\sigma(i+j)$ and $\sigma(i+2n-1-j)$. Also, let $e_{\sigma}^0(i)=\{\sigma(i),\sigma(2n)\}$. Let $\mathcal{B}_{\sigma}=(M_{\sigma}(1),\ldots,M_{\sigma}(2n-1))$ be the rooted order corresponding to $\sigma$, with root $\sigma(2n)$ and $M_{\sigma}(i)=\{e_{\sigma}^1(i),e_{\sigma}^2(i),\ldots,e_{\sigma}^{n-1}(i),e_{\sigma}^0(i)\}$ for each $i\in [2n-1]$.

One possible way to visualize these constructions is the following. Let the points $\sigma(1),\ldots,\sigma(2n-1)$ be vertices of a regular $2n-1$-sided polygon and let $\sigma(2n)$ be the center of the common circumscribed circle on which these points lie. Consider the edge set of $K_{2n}$ to be the set of all line segments between pairs of the $2n$ points. Now for each $1\leq i\leq 2n-1$, the perfect matching $M_{\sigma}(i)$ consists of the line segment joining $\sigma(i)$ and $\sigma(2n)$ along with the $n-1$ line segments that are perpendicular to it. Figure~\ref{fig:baranyai} provides an example for $i=(2n-1)$.

For $\sigma,\mu\in S_{2n}$, we say that $\mathcal{B}_{\sigma}=\mathcal{B}_{\mu}$ if and only if the two partitions have the same root vertex and $M_{\sigma}(i)=M_{\mu}(i)$ for each $i\in [2n-1]$; in other words, $\mathcal{B}_{\sigma}=\mathcal{B}_{\mu}$ if and only if $\sigma=\mu$. Under this definition, it is clear that each permutation corresponds to a unique rooted order. See Figure~\ref{fig:partitions} for an example of the rooted order of $K_{8}$ corresponding to the identity permutation.

\begin{figure}[ht]
\begin{center}
\includegraphics{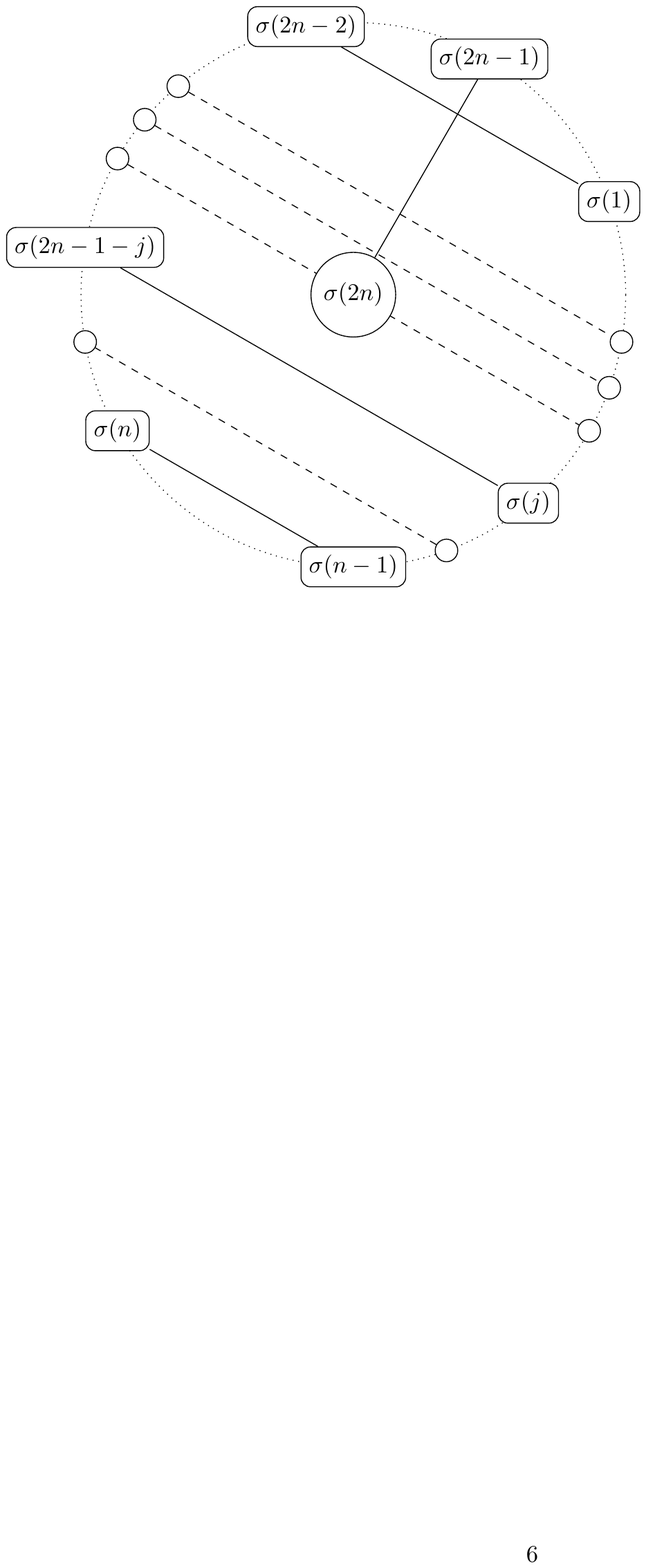}
\end{center}
\caption{A visual representation of the part $M_{\sigma}(2n-1)$ in $\mathcal{B}_{\sigma}$.}
\label{fig:baranyai}
\end{figure}

For each $1\leq i\leq 2n-1$, we fix an ordering of $M_{\sigma}(i)$, denoted by $\tilde{M}_{\sigma}(i)=(e_{\sigma}^{n-1}(i),e_{\sigma}^{n-2}(i),\ldots,e_{\sigma}^{1}(i),e_{\sigma}^0(i))$. Henceforth, we consider all parts of $\mathcal{B}_{\sigma}$ to be ordered in this manner; in other words, $\mathcal{B}_{\sigma}=(\tilde{M}_{\sigma}(1),\ldots,\tilde{M}_{\sigma}(2n-1))$. We can observe that $\mathcal{B}_{\sigma}$ corresponds to a \textit{cyclic order}, say $\psi_{\sigma}$, on ${[{2n \choose 2}]}$, where the edges in $\tilde{M}_{\sigma}(1)$ occupy the first $n$ positions of the cyclic order, the edges in $\tilde{M}_{\sigma}(2)$ occupy the next $n$ positions, and so on. More formally, let $\psi_{\sigma}(k)$ denote the element in the $k^{th}$ position of the cyclic order, for each $1\leq k\leq n(2n-1)$. Then for each $1\leq i\leq 2n-1$ and $1\leq j\leq n$, let $\psi_{\sigma}((i-1)n+j)=e_{\sigma}^{n-j}(i)$ (see Figure~\ref{fig:partitions}).

\begin{figure}[ht]
\begin{center}
\includegraphics{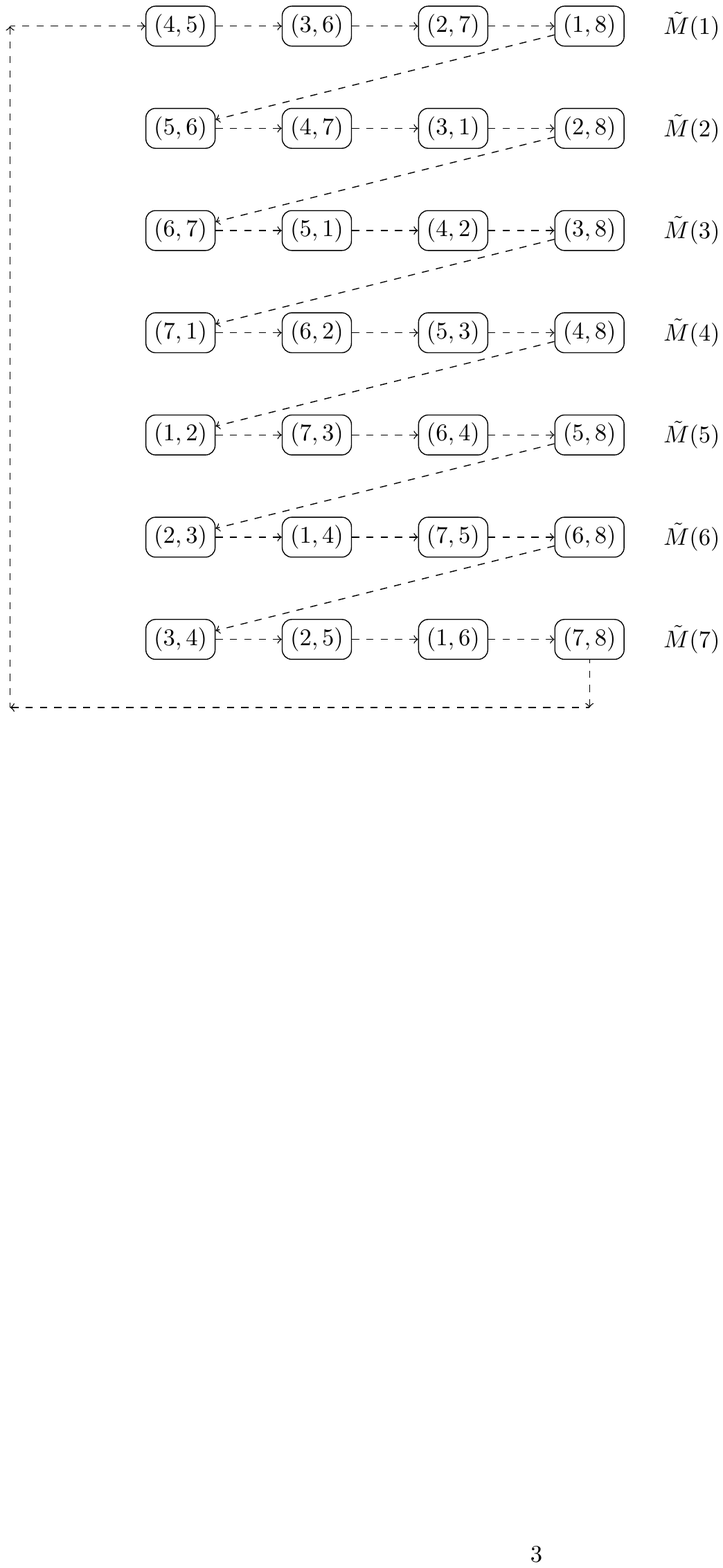}
\end{center}
\caption{A rooted Baranyai order for $n=4$, corresponding to the identity permutation. The dotted arrows indicate the cyclic order $\psi$.}
\label{fig:partitions}
\end{figure}


Now for a permutation $\pi\in S_{2n}$ and $c\in [2n-1]$, let $\pi_c$ be the permutation defined as follows. For each $1\leq i\leq 2n-1$, $\pi_c(i)=\pi(i+c)$ and $\pi_c(2n)=\pi(2n)$.
\begin{claim}\label{cyclic}
For each $1\leq i\leq 2n-1$, $\tilde{M}_{\pi_c}(i)=\tilde{M}_{\pi}(i+c)$.
\end{claim}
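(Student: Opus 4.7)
The plan is to verify the claim edge-by-edge, unpacking the definition of $\tilde{M}$ and exploiting the fact that $\pi_c$ is simply a cyclic shift of $\pi$ on the first $2n-1$ positions while fixing position $2n$.

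First I would show that $e_{\pi_c}^j(i) = e_{\pi}^j(i+c)$ for each $0 \le j \le n-1$, where indices are read modulo $2n-1$ in $[2n-1]$. For $j=0$ this is immediate from the definition of $\pi_c$:
\[
e_{\pi_c}^0(i) = \{\pi_c(i), \pi_c(2n)\} = \{\pi(i+c), \pi(2n)\} = e_{\pi}^0(i+c).
\]
For $1 \le j \le n-1$, I would expand
\[
e_{\pi_c}^j(i) = \{\pi_c(i+j), \pi_c(i-j+2n-1)\} = \{\pi((i+j)+c), \pi((i-j+2n-1)+c)\},
\]
and then use associativity and commutativity of addition modulo $2n-1$ to rewrite the right-hand side as $\{\pi((i+c)+j), \pi((i+c)-j+2n-1)\} = e_{\pi}^j(i+c)$. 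Here I need to check that the convention ``indices in $[2n-1]$ modulo $2n-1$'' is respected throughout. Since $\pi_c$ is defined for all arguments in $[2n-1]$ exactly via this modular shift, and the inner arguments $i+j$ and $i-j+2n-1$ lie in $[2n-1]$ by construction, the substitution is valid.

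Finally, combining the edge-wise equalities and observing that the fixed ordering $\tilde{M}_{\sigma}(i) = (e_{\sigma}^{n-1}(i), e_{\sigma}^{n-2}(i), \ldots, e_{\sigma}^0(i))$ is uniform in $\sigma$, I conclude
\[
\tilde{M}_{\pi_c}(i) = (e_{\pi_c}^{n-1}(i), \ldots, e_{\pi_c}^0(i)) = (e_{\pi}^{n-1}(i+c), \ldots, e_{\pi}^0(i+c)) = \tilde{M}_{\pi}(i+c).
\]

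I do not anticipate a genuine obstacle; the proof is essentially bookkeeping. The only point that deserves care is the modular arithmetic convention on $[2n-1]$: one must confirm that the relation $(i+j)+c \equiv (i+c)+j \pmod{2n-1}$ is being applied after both quantities are brought into $[2n-1]$, so that the resulting values are legitimate arguments of $\pi$. Once this is checked, the edge-wise match is automatic and the claim follows.
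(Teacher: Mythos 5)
Your proof is correct and follows essentially the same route as the paper: both reduce the claim to the edge-wise equalities $e_{\pi_c}^j(i) = e_{\pi}^j(i+c)$, treat $j=0$ separately, and for $1\le j\le n-1$ expand the definition of $e^j$ and apply the cyclic-shift definition of $\pi_c$ with modular arithmetic on $[2n-1]$. The extra care you take to flag the modular-arithmetic convention is welcome but not a departure from the paper's argument.
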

\begin{proof}
We wish to show that for each $0\leq j\leq n-1$, $e^j_{\pi_c}(i)=e^j_{\pi}(i+c)$. If $j=0$, then $e^0_{\pi_c}(i)=\{\pi_c(i),\pi_c(2n)\}=\{\pi(i+c),\pi(2n)\}=e^0_{\pi}(i+c)$, so let $1\leq j\leq n-1$. It can be seen that the following holds.
\begin{eqnarray*}
e^j_{\pi_c}(i) &=& \{\pi_c(i+j),\pi_c(i-j+2n-1)\} \\
&= & \{\pi(i+c+j),\pi(i+c-j+2n-1)\} \\
&= & e^j_{\pi}(i+c).
\end{eqnarray*}
\end{proof}


Next, we show that these cyclic orders satisfy a crucial property, namely that every sequence of $n-1$ consecutive edges in them correspond to matchings in $K_{2n}$. (Figure~\ref{fig:partitions} provides one example.) Formally, if $\psi$ is a cyclic order on $[{2n \choose 2}]$ and $i\in [n(2n-1)]$, let
$I^r_i(\psi)=\{\psi(i),\ldots,\psi(i+r-1)\}$ be the $r$-interval in $\psi$ beginning at index $i$. (In this case, addition is carried out modulo $n(2n-1)$.) Also, for an interval $I=(e_1,\ldots,e_r)$ lying in some $\psi_{\sigma}$, call $e_j$ the edge in position $j$ of the interval.
\begin{claim}\label{goodness}
If $\psi=\psi_{\sigma}$ is the cyclic order on $[{2n \choose 2}]$ corresponding to $\sigma\in S_{2n}$ and if $r\leq n-1$, then for each $i\in [n(2n-1)]$, $I^r_i(\psi)$ is a matching.
\end{claim}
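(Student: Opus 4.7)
My plan is to split into cases based on how the interval $I^r_i(\psi)$ sits with respect to the block partition of $\psi_\sigma$ into $\tilde{M}_\sigma(1), \ldots, \tilde{M}_\sigma(2n-1)$, each of length $n$.

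First I reduce to the case $\sigma = \mathrm{id}$. The defining formula gives $e^j_\sigma(i) = \{\sigma(v) : v \in e^j(i)\}$, so $\psi_\sigma$ is obtained from $\psi_{\mathrm{id}}$ simply by relabeling vertex endpoints by the bijection $\sigma$. Since a collection of edges is a matching if and only if its image under any vertex bijection is, I may assume $\sigma = \mathrm{id}$ and drop the subscript from the notation.

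Now either $I^r_i(\psi) \subseteq \tilde{M}(a)$ for some $a$, in which case $I^r_i(\psi) \subseteq M(a)$ is a subset of a perfect matching and we are done; or $I^r_i(\psi)$ straddles $\tilde{M}(a)$ and $\tilde{M}(a+1)$ (with the index taken modulo $2n-1$), consisting of the last $k$ edges of the former followed by the first $r-k$ edges of the latter for some $1 \le k \le r-1$. The only thing to verify in the second case is that these two sets of edges have disjoint vertex sets, since within-block disjointness is automatic. Using $e^0(a) = \{a, 2n\}$ and $e^j(a) = \{a+j, a-j\}$ in $\mathbb{Z}_{2n-1}$ for $1 \le j \le n-1$, direct computation shows that the last $k$ edges of $\tilde{M}(a)$ cover $V_1 = \{2n\} \cup \{a-k+1, \ldots, a+k-1\}$ (a length-$(2k-1)$ arc in $\mathbb{Z}_{2n-1}$ centered at $a$ together with the root $2n$), while the first $r-k$ edges of $\tilde{M}(a+1)$ cover the length-$2(r-k)$ arc $V_2 = \{a+n-r+k+1, \ldots, a+n+r-k\}$ in $\mathbb{Z}_{2n-1}$. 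Since $2n \notin V_2$ and the two arcs $V_1 \setminus \{2n\}$ and $V_2$ in $\mathbb{Z}_{2n-1}$ are separated by gaps of sizes $n-r+1$ and $n-r-1$ on the two sides, both nonnegative exactly when $r \le n-1$, we conclude $V_1 \cap V_2 = \emptyset$ and hence $I^r_i(\psi)$ is a matching.

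The main (and mild) obstacle is just the modular bookkeeping of the arcs $V_1$ and $V_2$ and checking the gap inequalities; everything else is forced by the block structure. If one prefers to avoid treating a general $a$, Claim \ref{cyclic} can be invoked up front to reduce to $a = 1$ so that the indices stay in $\{1, \ldots, 2n-1\}$ without any wraparound, making the calculation slightly cleaner.
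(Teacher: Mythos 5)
Your proof is correct and follows essentially the same route as the paper's: reduce to the identity permutation, dispose of the single-block case trivially, and for intervals straddling two consecutive blocks, verify that the vertex sets covered by the two pieces are disjoint. The only cosmetic difference is that the paper fixes the block boundary at $i=2n-1$ (so all indices live in $[2n]$) and checks disjointness of the two pieces edge-by-edge, whereas you keep a general block index $a$, compute the two covered vertex sets as arcs in $\mathbb{Z}_{2n-1}$, and observe that the gaps $n-r+1$ and $n-r-1$ between them are nonnegative precisely when $r\leq n-1$. Your arc-and-gap bookkeeping also makes transparent exactly where the hypothesis $r\leq n-1$ enters (the $n-r-1$ gap becomes negative at $r=n$), which the paper's proof shows somewhat more implicitly through its restriction to $j\leq n-3$.
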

\begin{proof}
We will assume that $r=n-1$, as this will imply that the result holds for each $1\leq r\leq n-1$. Without loss of generality, it will be sufficient to prove the claim when $\sigma$ is the identity permutation. Let the corresponding rooted order be $\mathcal{B}_{\sigma}=\mathcal{B}=(\tilde{M}(1),\ldots,\tilde{M}(2n-1))$ where $\tilde{M}(i)=\{e^{n-1}(i),e^{n-2}(i),\ldots,e^1(i),e^0(i)\}$. (As we are dealing with a fixed permutation, we drop the subscript $\sigma$ from the notation.) It is trivial to note that any interval contained entirely within a single $\tilde{M}(i)$, for some $1\leq i\leq 2n-1$, is a matching, so consider an interval of length $r$ that has a non-empty intersection with $\tilde{M}(i)$ and $\tilde{M}(i+1)$ for some $1\leq i\leq 2n-1$. Without loss of generality, suppose $i=2n-1$, so $i+1=1$, addition being done modulo $2n-1$. For each $0\leq j\leq n-3$, define the interval $I_j$ as follows.
$$I_j=\{\overbrace{\underbrace{e^j(2n-1),\ldots,e^1(2n-1)}_{j\text{ elements}},e^0(2n-1)}^{\in \tilde{M}(2n-1)},\underbrace{\overbrace{e^{n-1}(1),\ldots,e^{j+2}(1)}^{\in \tilde{M}(1)}}_{(n-j-2)\text{ elements.}}\}.$$ (Notice that if $j \in \{n-2,n-1\}$, then $I$ lies entirely in $\tilde{M}(2n-1)$.) We will show that $I_j$ is a matching for each $0\leq j\leq n-3$. This is clear when $j=0$, as $e^1(1)=\{2,2n-1\}$, so let $1\leq j\leq n-3$. Let $k,l$ be such that $1\leq k\leq j$ and $j+2\leq l\leq n-1$. Now $e^k(2n-1)=\{k,2n-(k+1)\}$ and $e^l(1)=\{l+1,2n-l\}$. As $l\geq k+2$, this implies $e^k(2n-1)\cap e^l(1)=\emptyset$, thus proving that the interval $I_j$ is a matching.
\end{proof}


\textbf{Remark:} Claim \ref{goodness} can also be described in terms of powers of Hamiltonian cycles in a particular Kneser graph. The Kneser graph $K(n,r)$ is defined as follows. The vertex set of $K(n,r)$ is ${[n] \choose r}$, and for any $A,B\in {[n] \choose r}$, there is an edge between $A$ and $B$ if and only if $A\cap B=\emptyset$. Now, for any graph $G$ on $m$ vertices and $k\geq 1$, we say that $G$ contains the $k^{th}$ power of a Hamiltonian cycle if there is a cyclic ordering of its vertices, say $(v_1,\ldots,v_m)$, such that for any $i,j\in [m]$ with $i<j$, if $\textrm{ min }\{j-i,i+m-j\}\leq k$, then there is an edge between vertices $v_i$ and $v_j$. By Claim \ref{goodness}, we have the following.

\begin{cor}\label{kneserham}
The Kneser graph $K(2n,2)$ contains a $k^{th}$ power of a Hamiltonian cycle for each $1\leq k\leq n-2$.
\end{cor}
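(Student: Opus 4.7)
The plan is to read off Corollary \ref{kneserham} essentially directly from Claim \ref{goodness}. The vertices of $K(2n,2)$ are precisely the edges of $K_{2n}$, i.e.\ elements of $\binom{[2n]}{2}$, and two such vertices are adjacent iff the corresponding edges are vertex-disjoint. A $k^{th}$ power of a Hamiltonian cycle in $K(2n,2)$ is exactly a cyclic ordering of all $\binom{2n}{2}$ edges of $K_{2n}$ such that any two edges lying within cyclic distance $k$ are disjoint.

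So I would pick any $\sigma\in S_{2n}$ (the identity, say), and consider the cyclic order $\psi_\sigma$ on $[\binom{2n}{2}]$ constructed above. Observe that $\psi_\sigma$ is a bona fide cyclic ordering of all $n(2n-1)=\binom{2n}{2}$ edges of $K_{2n}$, because $\mathcal{B}_\sigma$ is a partition of $E(K_{2n})$ into $2n-1$ perfect matchings of size $n$, and the cyclic order enumerates them in sequence.

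Now apply Claim \ref{goodness} with $r=n-1$: every interval $I^{n-1}_i(\psi_\sigma)$ of $n-1$ consecutive elements is a matching in $K_{2n}$, hence its edges are pairwise disjoint. Consequently, whenever two positions $i<j$ satisfy $\min\{j-i,\, i+\binom{2n}{2}-j\}\le n-2$, the corresponding edges lie together inside some window of $n-1$ consecutive positions and are therefore disjoint, i.e.\ adjacent in $K(2n,2)$. This exhibits $\psi_\sigma$ as an $(n-2)^{nd}$ power of a Hamiltonian cycle in $K(2n,2)$, and since every $k^{th}$ power with $1\le k\le n-2$ is a spanning subgraph of the $(n-2)^{nd}$ power, the corollary follows.

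There is really no obstacle here; the only thing to double-check is the bookkeeping that $\psi_\sigma$ has length exactly $\binom{2n}{2}$ and visits each edge once (which is immediate from the Baranyai partition property of $\mathcal{B}_\sigma$), and that cyclic distance $\le n-2$ forces containment in an interval of length $n-1$. The proof is thus a one-line appeal to Claim \ref{goodness}.
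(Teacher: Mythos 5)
Your proposal is correct and is exactly the argument the paper intends: the paper states the corollary with no further justification beyond ``By Claim~\ref{goodness},'' and your unpacking—that $\psi_\sigma$ is a cyclic enumeration of all $\binom{2n}{2}$ edges, that Claim~\ref{goodness} with $r=n-1$ makes every window of $n-1$ consecutive edges pairwise disjoint, and that cyclic distance at most $n-2$ therefore forces adjacency in $K(2n,2)$—is precisely the one-line appeal the authors have in mind.
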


\section{Proof of Main Theorem}\label{main}


The statement of the theorem is trivial when $n\leq 2$, so let $n\geq 3$. For $r\leq n-1$, let $\mathcal{A}\subseteq \mathcal{M}^r_n$ be an intersecting family of $r$-matchings in $K_{2n}$. For $\sigma\in S_{2n}$, let $\mathcal{B}_{\sigma}$ be the unique rooted order and $\psi_{\sigma}$ be the cyclic order on $[{2n \choose 2}]$ corresponding to $\sigma$. We say that a matching $A\in \mathcal{M}^r_n$ is \textit{compatible} with $\sigma$ if some $r$-interval in $\psi_{\sigma}$ is $A$. Let $\mathcal{A}_{\sigma}=\{A\in \mathcal{A}:A \textrm{ is compatible with } \sigma\}$. Using a well-known argument of Katona, it can be shown that for any $\sigma\in S_{2n}$, $|\mathcal{A}_{\sigma}|\leq r$, with equality holding only when $\mathcal{A}_{\sigma}$ is a \textit{star}, i.e. for some $i\in [n(2n-1)]$, the edge $e=\psi_{\sigma}(i)$ is contained in every set in $\mathcal{A}_{\sigma}$. We say that $\sigma$ is \textit{saturated} and \textit{centered} at $e$ if $|\mathcal{A}_{\sigma}|= r$ and $e$ is the common edge that lies in every set in $\mathcal{A}_{\sigma}$. On the other hand, for a set $A\in \mathcal{M}^r_n$, let $Q_A=\{\sigma\in S_{2n}: A \textrm{ is compatible with } \sigma\}$ and let $q_A=|Q_A|$. We prove the following claim.
\begin{claim}\label{findq}
For any $A\in \mathcal{M}^r_n$, $q_A=n(2n-1)r!2^r(2n-2r)!$.
\end{claim}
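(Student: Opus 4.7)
The plan is to count the set of pairs
\[
N \;:=\; \bigl|\{(\sigma, i) \in S_{2n} \times [n(2n-1)] : I^r_i(\psi_\sigma) = A\}\bigr|
\]
in two different ways, and then use an $S_{2n}$-symmetry of the construction to conclude that $q_A$ does not depend on $A$.

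First I would observe that for any fixed $\sigma$ the intervals $I^r_1(\psi_\sigma), \ldots, I^r_{n(2n-1)}(\psi_\sigma)$ are pairwise distinct as subsets of $E(K_{2n})$: since $\psi_\sigma$ lists each edge of $K_{2n}$ exactly once, two $r$-intervals sharing the same set of edges would have to coincide as cyclic index sets, forcing their starting indices to be equal (because $r < n(2n-1)$). Consequently every $\sigma \in Q_A$ contributes exactly one value of $i$ to $N$, so $N = q_A$. On the other hand, Claim~\ref{goodness} (applied with $r \le n-1$) says that every $r$-interval of $\psi_\sigma$ is a matching in $\mathcal{M}^r_n$, and hence
\[
\sum_{A \in \mathcal{M}^r_n} q_A \;=\; |S_{2n}| \cdot n(2n-1) \;=\; (2n)! \cdot n(2n-1).
\]

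The second step is the symmetry. A direct inspection of the formulas $e^j_\sigma(i) = \{\sigma(i+j), \sigma(i-j+2n-1)\}$ and $e^0_\sigma(i) = \{\sigma(i), \sigma(2n)\}$ shows that for any $\tau \in S_{2n}$ one has $\psi_{\tau \circ \sigma}(k) = \tau(\psi_\sigma(k))$ for all positions $k$. Extending the natural action of $\tau$ on edges to $r$-matchings coordinatewise, this gives $\sigma \in Q_A \iff \tau \circ \sigma \in Q_{\tau(A)}$, and so $q_A = q_{\tau(A)}$. Because $S_{2n}$ acts transitively on $r$-matchings of $K_{2n}$, this forces $q_A$ to be constant in $A$; combining with the displayed identity and the formula $|\mathcal{M}^r_n| = \chi(n,r) = (2n)!/(2^r \, r!\, (2n-2r)!)$ yields
\[
q_A \;=\; \frac{(2n)! \cdot n(2n-1)}{\chi(n,r)} \;=\; n(2n-1)\, r!\, 2^r\, (2n-2r)!.
\]

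The only step that requires care is the injectivity observation ensuring $N = q_A$; once that is in hand, everything else is bookkeeping. An alternative, perhaps more hands-on route is to count $N$ directly by building $\sigma$: fix a starting position $i$ ($n(2n-1)$ choices), choose a bijection from the $r$ edges of $A$ to the $r$ interval slots ($r!$ ways), orient each edge within its slot ($2^r$ ways), and freely place the remaining $2n-2r$ vertices in the untouched positions of $\sigma$ ($(2n-2r)!$ ways). The slots correspond to $r$ disjoint unordered pairs of positions of $\sigma$ because $A$ is a matching, so the recipe is well-defined and yields precisely the same product, giving an independent verification of the formula without invoking symmetry.
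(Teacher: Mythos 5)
Your primary argument is correct and takes a genuinely different route from the paper. The paper enumerates $Q_A$ directly, partitioning it into $Q^1_A$ and $Q^2_A$ according to whether the root vertex $\sigma(2n)$ lies in a vertex of $A$, and counts each piece separately to get $(n-r)(2n-1)\cdot r!\,2^r(2n-2r)!$ and $r(2n-1)\cdot r!\,2^r(2n-2r)!$ respectively. You instead double-count pairs $(\sigma,i)$ with $I^r_i(\psi_\sigma)=A$: the injectivity observation gives $N=q_A$; Claim~\ref{goodness} gives $\sum_A q_A = (2n)!\,n(2n-1)$; and the equivariance $\psi_{\tau\circ\sigma}(k)=\tau(\psi_\sigma(k))$, combined with the transitivity of $S_{2n}$ on $r$-matchings, forces $q_A$ to be independent of $A$, so division by $\chi(n,r)=(2n)!/(2^r\,r!\,(2n-2r)!)$ finishes the proof. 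This buys you a proof with no casework and makes the independence of $q_A$ on $A$ conceptually transparent rather than an artifact of arithmetic. Your ``hands-on'' alternative in the last paragraph is in fact closest in spirit to the paper's proof: you sum over all $n(2n-1)$ starting positions at once (noting that any $r$ consecutive slots of $\psi_\sigma$ are $r$ pairwise disjoint position-pairs, which is precisely the content of Claim~\ref{goodness} read off in positions), whereas the paper splits the $n(2n-1)$ starting positions into $(n-r)(2n-1)$ that avoid the root slot and $r(2n-1)$ that contain it. One small wording quibble on the alternative: the position-slots being pairwise disjoint is a property of the Baranyai construction and of $r\le n-1$ (i.e.\ it is Claim~\ref{goodness}), not a consequence of $A$ being a matching; $A$ being a matching is what guarantees the $2r$ chosen vertices are distinct so the assignment to slots is a well-defined injection. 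Both facts are needed and both are true, so the count stands.
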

\begin{proof}
Let $A=\{\{x_1,y_1\},\ldots,\{x_r,y_r\}\}$, and let $g(A)=\{x_1,\ldots,x_r\}\cup\{y_1,\ldots,y_r\}$. Let $Q^1_A=\{\sigma\in Q_A:\sigma(2n)\not\in g(A)\}$ and let $Q^2_A=Q_A\setminus Q^1_A$. Finally, let $q^1_A=|Q^1_A|$ and $q^2_A=|Q^2_A|$. We compute $q^1_A$ and $q^2_A$ separately. In both cases, our strategy will be to compute these numbers based on the position of $A$ in the corresponding rooted orders.
\\
\\
If $\sigma\in Q^1_A$, then $A$ will lie entirely in one part of $\mathcal{B}_{\sigma}$, since  the last edge in every part contains $\sigma(2n)$. Within this (ordered) part, $A$ will be an interval beginning at the edge in the $j^{th}$ position of the part, for some $1\leq j\leq n-r$. We count all permutations $\sigma$ where $A$ is an interval in $\mathcal{B}_{\sigma}$ lying entirely in the $i^{th}$ part of $\mathcal{B}_{\sigma}$, and beginning at position $j$ in the part, for some $i\in [2n-1]$ and $j\in [n-r]$. More formally, we count all permutations $\sigma$ such that $\{e_{\sigma}^{n-j}(i),e_{\sigma}^{n-j-1}(i),\ldots,e_{\sigma}^{n-j-r+1}(i)\}=\{\{x_1,y_1\},\ldots,\{x_r,y_r\}\}$. There are $r$ pairs of positions such that the permutation $\sigma$ maps each pair to a unique edge $\{x_k,y_k\}$, $1\leq k\leq r$. Clearly there are $r!2^r(2n-2r)!$ such permutations, and summing over all $(i,j)$ pairs gives us $q^1_A=(n-r)(2n-1)r!2^r(2n-2r)!$.
\\
\\
Next, we compute $q^2_A$. If $\sigma\in Q^2_A$, then there is an $i\in [2n-1]$ such that $e^0_{\sigma}(i)\in A$. For a given $i\in [2n-1]$, we first count all permutations $\sigma$ such that $e^0_{\sigma}(i)\in A$ and $A$ contains at most $r-1$ edges from the $(i+1)^{th}$ part of $\mathcal{B}_{\sigma}$. Suppose $A$ contains $j$ edges from $\tilde{M}_{\sigma}(i)$, for $1\leq j\leq r$, so

$$A=\{e_{\sigma}^{j-1}(i),\ldots,e_\sigma^1(i),e_{\sigma}^0(i),e_{\sigma}^{n-1}(i+1),\ldots,e_{\sigma}^{n-(r-j)}(i+1)\}.$$

The number of permutations in this case is also $r!2^r(2n-2r)!$, by the same argument as before. Summing over all values of $i$ and $j$ gives us $q^2_A=(2n-1)(r)(r!2^r(2n-2r)!)$.
\\
\\
Finally, $q_A=q^1_A+q^2_A=(n-r)(2n-1)r!2^r(2n-2r)!+r(2n-1)r!2^r(2n-2r)!=n(2n-1)r!2^r(2n-2r)!$, as required.
\end{proof}

Using Claim \ref{findq} and double counting, we can now give a proof of the bound in Theorem \ref{mainthm} as follows. We have $n(2n-1)r!2^r(2n-2r)!|\mathcal{A}|\leq r(2n)!$, which simplifies to $|\mathcal{A}|\leq \phi(n,r)$.
\\
\\
We now turn our attention to characterizing the extremal structures. Suppose that $|\mathcal{A}|=\phi(n,r)$. This implies that for every $\sigma\in S_{2n}$, $\sigma$ is saturated and hence centered at some edge in $E$. Throughout the rest of this section, we will assume that every $\sigma\in S_{2n}$ is saturated with respect to the family $\mathcal{A}$. We will show that every $\sigma$ is centered at the same edge, say $e\in E$, which will then imply that $\mathcal{A}=\mathcal{M}^r_n(e)$. Our basic strategy will be as follows. We first show that for every $c\in [2n-1]$, there exists a permutation $\sigma$ centered at the edge $e=\{\sigma(c),\sigma(2n)\}$. Next, we show that for $\sigma$ centered at $e=\{\sigma(c),\sigma(2n)\}$, any permutation $\mu$ that is obtained from $\sigma$ by an \textit{adjacent transposition}, i.e. by swapping a pair of elements in adjacent positions of $\sigma$, will also be centered at $e$. This will readily imply that every permutation will be centered at the same edge. We begin by making the following basic observation.

\begin{claim}\label{obs}
For $\pi\in S_{2n}$ and $1\leq c\leq 2n-1$, $\mathcal{A}_{\pi}=\mathcal{A}_{\pi_c}$.
\end{claim}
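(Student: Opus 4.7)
The plan is to argue that $\mathcal{B}_{\pi_c}$ is nothing more than a cyclic re-indexing of the ordered parts of $\mathcal{B}_{\pi}$, and that this forces the two associated cyclic orders $\psi_{\pi}$ and $\psi_{\pi_c}$ on $[\binom{2n}{2}]$ to be cyclic rotations of one another. Since compatibility of a matching $A$ with a permutation is defined purely in terms of $A$ appearing as an $r$-interval in the associated cyclic order, and the set of $r$-intervals of a cyclic order is manifestly invariant under cyclic rotation, the two families $\mathcal{A}_\pi$ and $\mathcal{A}_{\pi_c}$ must coincide.

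First, I would invoke Claim \ref{cyclic} directly: for each $1 \leq i \leq 2n-1$, one has $\tilde{M}_{\pi_c}(i) = \tilde{M}_{\pi}(i+c)$ (with the index taken modulo $2n-1$). Thus the ordered sequence of ordered parts $\mathcal{B}_{\pi_c} = (\tilde{M}_{\pi_c}(1), \ldots, \tilde{M}_{\pi_c}(2n-1))$ is obtained from $\mathcal{B}_{\pi} = (\tilde{M}_{\pi}(1), \ldots, \tilde{M}_{\pi}(2n-1))$ simply by starting the listing at position $c+1$ instead of position $1$, keeping the order of edges within each part unchanged.

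Next, I would translate this to the cyclic order $\psi$ on $[\binom{2n}{2}]$. Recalling the definition $\psi_\sigma((i-1)n + j) = e_\sigma^{n-j}(i)$, each part $\tilde{M}_\sigma(i)$ occupies a contiguous block of $n$ positions in $\psi_\sigma$. Combining this with Claim \ref{cyclic} gives $\psi_{\pi_c}(k) = \psi_\pi(k + cn)$ for every $k \in [n(2n-1)]$, where addition is modulo $n(2n-1)$; in other words, $\psi_{\pi_c}$ is the cyclic rotation of $\psi_\pi$ by $cn$ positions.

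Finally, a matching $A \in \mathcal{A}$ is compatible with $\sigma$ precisely when $A = I^r_k(\psi_\sigma)$ for some $k$. Because the rotation $\psi_{\pi_c}(k) = \psi_\pi(k+cn)$ yields a bijection $k \mapsto k + cn$ on $[n(2n-1)]$ that sends $r$-intervals of $\psi_{\pi_c}$ to $r$-intervals of $\psi_\pi$ (and vice versa), the set of matchings arising as $r$-intervals is identical for the two cyclic orders. Hence $\mathcal{A}_\pi = \mathcal{A}_{\pi_c}$, as claimed. There is no genuine obstacle here; the entire content of the claim is the observation that the ``starting vertex'' $\pi(1), \pi(2), \ldots$ along the polygon boundary in the visual model is irrelevant once one looks at $\psi_\sigma$ as an unrooted cyclic object, so this step is really just a careful unwinding of definitions using Claim \ref{cyclic}.
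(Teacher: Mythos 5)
Your proof is correct and takes the same route as the paper's: invoke Claim \ref{cyclic} to conclude that $\psi_{\pi_c}$ is a cyclic rotation of $\psi_\pi$, hence the two cyclic orders have the same set of $r$-intervals. You simply spell out the rotation explicitly (by $cn$ positions) where the paper leaves it at ``cyclically equivalent.''
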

\begin{proof}
By Claim~\ref{cyclic} in the previous section, for each $1\leq i\leq 2n-1$, $\tilde{M}_{\pi_c}(i)=\tilde{M}_{\pi}(i+c)$, i.e. the orders $\psi_{\pi}$ and $\psi_{\pi_c}$ are \textit{cyclically equivalent}. The claim follows.
\end{proof}
We now establish a lemma which states that for every $1\leq c\leq 2n-1$, there exists a $\sigma\in S_{2n}$ that is centered at the edge $e=\{\sigma(c),\sigma(2n)\}$.
\begin{lemma}\label{relabel}
For every $1\leq c\leq 2n-1$, there exists a $\sigma\in S_{2n}$ that is centered at the edge $\{\sigma(c),\sigma(2n)\}$.
\end{lemma}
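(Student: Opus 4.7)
My plan is to reduce Lemma \ref{relabel} to producing a single ``base'' permutation $\pi_0$ whose center edge is a root edge of its own Baranyai partition, and then to apply the cyclic-shift identity of Claim \ref{obs} to transport the root index to the prescribed value $c$.

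For the shift step, suppose I have $\pi_0 \in S_{2n}$ centered at $e = \{\pi_0(c_0),\pi_0(2n)\} = e^0_{\pi_0}(c_0)$ for some $c_0 \in [2n-1]$. I would set $\sigma := (\pi_0)_{c_0-c}$ (indices modulo $2n-1$). By Claim \ref{obs}, $\tilde{M}_\sigma(i) = \tilde{M}_{\pi_0}(i + c_0 - c)$ for all $i$, so $\psi_\sigma$ and $\psi_{\pi_0}$ are cyclically equivalent and hence $\mathcal{A}_\sigma = \mathcal{A}_{\pi_0}$. Thus $\sigma$ is centered at the same edge $e$, while unrolling the shift gives $\sigma(c) = \pi_0(c_0)$ and $\sigma(2n) = \pi_0(2n)$, so $e = \{\sigma(c),\sigma(2n)\}$ as required.

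To produce the base $\pi_0$, I would start from an arbitrary $\pi \in S_{2n}$. By the standing saturation assumption, $\pi$ is centered at some unique edge $e$. If $\pi(2n) \in e$, then $e$ is already a root edge of $\pi$ and $\pi_0 := \pi$ works. Otherwise $e = \{u,v\}$ with $u,v \neq \pi(2n)$, and I would form $\pi_0$ by exchanging the values $u$ and $\pi(2n)$ in $\pi$: explicitly, $\pi_0(2n) = u$, $\pi_0(\pi^{-1}(u)) = \pi(2n)$, and $\pi_0$ agrees with $\pi$ elsewhere. This makes $e$ a root edge of $\pi_0$, namely $e = e^0_{\pi_0}(\pi^{-1}(v))$.

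The main obstacle is verifying that this value-interchange preserves the center, i.e.\ that $\pi_0$ is still centered at $e$ rather than at some other edge. The difficulty is that swapping the root with another vertex alters the Baranyai partition at every edge incident to either vertex, so $\mathcal{A}_\pi$ and $\mathcal{A}_{\pi_0}$ are not related in any transparent way. I would tackle this by combining the saturation of $\pi_0$ (which forces $|\mathcal{A}_{\pi_0}| = r$ with a unique common edge) with an explicit exhibition of two specific $r$-matchings lying in $\mathcal{A}_{\pi_0}$ whose only common edge is $e$. Concretely, I would identify intervals of the new cyclic order $\psi_{\pi_0}$ that coincide with matchings already known to lie in $\mathcal{A}$ (via the compatibility with $\pi$ and the intersecting property), and show that their pairwise intersection collapses to $e$. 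This interval-preservation analysis under the value-swap is the technical heart of the argument.
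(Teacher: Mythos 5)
Your reduction-by-cyclic-shift step (using Claim \ref{obs} to transport the root index to $c$) matches the paper's and is correct. However, your construction of the base permutation $\pi_0$ is genuinely different from the paper's, and the step you yourself flag as ``the technical heart'' --- verifying that the value-interchange of $u$ with $\pi(2n)$ preserves the center --- is exactly where the argument is missing, not merely deferred. This is a real gap, not a routine detail: swapping the root vertex with an endpoint of $e$ reshuffles \emph{every} edge incident to $u$ or to $\pi(2n)$ in the Baranyai partition, so the $r$-intervals of $\psi_{\pi_0}$ through $e$ consist largely of edges that were not intervals of $\psi_{\pi}$ at all, and you give no mechanism for showing that two of them (with intersection exactly $e$, or one in $\mathcal{A}$ and an adjacent one out of $\mathcal{A}$) are already known to lie in $\mathcal{A}$. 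Without that, saturation of $\pi_0$ only tells you it is centered \emph{somewhere}, not at $e$.

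The paper avoids this difficulty by not modifying $\pi$ at all. Instead it reads off the $(r+1)$-interval $I=(e_r,\dots,e_0)$ of $\psi_{\pi}$ ending at the center edge $e_0$, observes that $\{e_{r-1},\dots,e_0\}\in\mathcal{A}$ while $\{e_r,\dots,e_1\}\notin\mathcal{A}$ (both facts read off from $\pi$ being centered at $e_0$), and then builds a \emph{fresh} $\sigma$ whose last part has $e^j_{\sigma}(2n-1)=e_j$ for $0\le j\le r$. In $\psi_{\sigma}$ those same two $r$-intervals appear consecutively at the end of $\tilde M_{\sigma}(2n-1)$, and their known in/out membership forces the center of the saturated $\sigma$ to be $e_0=\{\sigma(2n-1),\sigma(2n)\}$. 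This ``one known interval in, the adjacent one out'' pincer is the device your proposal lacks. Note also that your proposal does not address $r=n-1$ separately; the paper must treat it with a distinct construction because the $(r+1)=n$-interval is no longer a matching (Claim \ref{goodness} only covers intervals of length $\le n-1$), and any correct argument has to cope with the repeated vertex in $I$.
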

\begin{proof}
We begin by noting that it will be sufficient to prove the claim for $c=2n-1$, as Claim \ref{obs} will then imply that it holds for every value of $c$. 

Consider $\pi\in S_{2n}$. Suppose first that $r\leq n-2$ and let $\pi$ be centered at some edge $e_0$. Next, suppose $I=(e_r,\ldots,e_0)$ is the $r$-interval in $\psi_{\pi}$ ending in the edge $e_0$. Clearly, $\{e_{r-1},\ldots,e_0\}\in \mathcal{A}_{\mu}$ but $\{e_r,\ldots,e_1\}\notin \mathcal{A}_{\mu}$. As $r\leq n-2$, $I$, which is an interval of length $r+1$, is a matching by Claim \ref{goodness}. Now any permutation $\sigma$ that satisfies $e_{\sigma}^j(2n-1)=e_j$ for each $0\leq j\leq r$ will be centered at $\{\sigma(2n-1),\sigma(2n)\}$. As an example, let $e_k=\{x_k,y_k\}$ for each $0\leq k\leq r$ and define the permutation $\sigma\in S_{2n}$ as follows. For each $1\leq k\leq r$, let $\sigma(k)=x_k$ and $\sigma(2n-1-k)=y_k$. Also, let $\sigma(2n-1)=x_0$ and $\sigma(2n)=y_0$. Finally, for each $r+1\leq j\leq 2n-r-2$, let $\sigma(j)=j$. It is clear that $\sigma$ is centered at $\{\sigma(2n-1),\sigma(2n)\}$.

Next, let $r=n-1$. We may assume without loss of generality that $\pi$ is centered at some edge in $\tilde{M}_{\pi}(2n-1)$. (If $e\in \tilde{M}_{\pi}(c)$ for some $c\in [2n-2]$, consider the permutation $\pi_c$ instead, using Claim \ref{obs}.) Suppose then that $\pi$ is centered at $e=e^{n-j}_{\pi}(2n-1)=\{\pi(n-j),\pi(n+j-1)\}$ for some $j\in [n-1]$. Let $I$ be the interval of length $n$ beginning at edge $e$. More formally,

$$I=(\underbrace{e^{n-j}_{\pi}(2n-1),\ldots,e^0_{\pi}(2n-1)}_{n-j+1 \text{ edges from } \tilde{M}_{\pi}(2n-1)},\underbrace{e^{n-1}_{\pi}(1),\ldots,e^{n-j+1}_{\pi}(1)}_{j-1 \text{ edges from }\tilde{M}_{\pi}(1)}).$$

Note that $I$ is not a matching in this case, as the edges $e^{n-j}_{\pi}(2n-1)=\{\pi(n-j),\pi(n+j-1)\}$ and $e^{n-j+1}_{\pi}(1)=\{\pi(n-j+2),\pi(n+j-1)\}$, both of which lie in $I$, share the vertex $\pi(n+j-1)$. Note also that the vertex $\pi(n-j+1)$ does not lie in any edge of $I$.

Let $F=(f_0,\ldots,f_{n-1})$, where $f_k=\{x_k,y_k\}$ is the edge in position $k+1$ of the interval $I$ for each $0\leq k\leq n-1$. We define $\sigma$ as follows. For each $1\leq k\leq n-2$, let $\sigma(k)=x_k$ and let $\sigma(2n-1-k)=y_k$. Similarly, let $\sigma(2n-1)=x_0$ and $\sigma(2n)=y_0$. This gives $e^k_{\sigma}(2n-1)=f_k$ for each $0\leq k\leq n-2$. Finally, let $\sigma(n-1)=\pi(n-j+1)$ and $\sigma(n)=\pi(n-j+2)$. This implies $e^{n-1}_{\sigma}(2n-1)=\{\pi(n-j+1),\pi(n-j+2)\}$. Now the interval of length $n-1$ that ends in $e^0_{\sigma}(2n-1)$ is a matching that lies in $\mathcal{A}$; on the other hand, the interval of length $n-1$ that ends in $e^1_{\sigma}(2n-1)$ does not lie in $\mathcal{A}$, as it has empty intersection with the matching in $\mathcal{A}_{\pi}$ that ends in the edge $e=e^{n-j}_{\pi}(2n-1)$. As $\sigma$ is saturated, this implies that it must be centered at $\{\sigma(2n-1),\sigma(2n)\}$.
\end{proof}
Now, for $\sigma\in S_{2n}$ and $1\leq j\leq 2n-1$, let $T_j(\sigma)$ be the permutation obtained from $\sigma$ by swapping the elements in positions $j$ and $j+1$ of $\sigma$. Similarly, for $\sigma\in S_{2n}$ and $1\leq j\leq n-1$, let $R_j(\sigma)$ be the permutation obtained from $\sigma$ by swapping the elements in position $j$ and $2n-1-j$ (see Figure~\ref{fig:transpositions}). Note that when $j=n-1$, $R_{n-1}(\sigma)=T_{n-1}(\sigma)$.

\begin{figure}[ht]
\begin{center}
\includegraphics{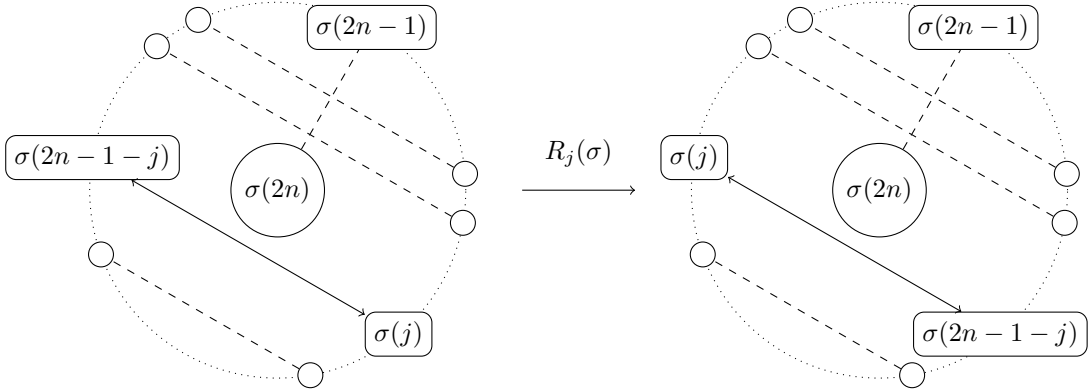}
\end{center}
\caption{An illustration of the $R_j(\sigma)$ operation.}
\label{fig:transpositions}
\end{figure}

We prove the following lemma about the $R_{j}$ operations.

\begin{lemma}\label{swaps}
For $\sigma\in S_{2n}$, and $1\leq j\leq n-1$, if $\sigma$ is saturated and centered at the edge $\{\sigma(2n-1),\sigma(2n)\}$, then if the permutation $R_{j}(\sigma)$ is saturated, it will also be centered at $\{\sigma(2n-1),\sigma(2n)\}$.
\end{lemma}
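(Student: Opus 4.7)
The plan is to exploit the invariance of the last Baranyai part under $R_j$, together with the characterisation of the extremal star in Katona's cycle method.

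The first step is a direct verification that $\tilde{M}_\mu(2n-1) = \tilde{M}_\sigma(2n-1)$ as ordered tuples, where $\mu = R_j(\sigma)$. Recall $e^0_\sigma(2n-1) = \{\sigma(2n-1), \sigma(2n)\}$ and $e^k_\sigma(2n-1) = \{\sigma(k), \sigma(2n-1-k)\}$ for $k \in [1, n-1]$. For $k = 0$ both endpoints are fixed by $R_j$; for $k = j$ both endpoints are swapped so the edge is unchanged as a set; for $k \in [1, n-1] \setminus \{j\}$ neither $k$ nor $2n-1-k$ lies in $\{j, 2n-1-j\}$ (using $k \le n-1 < n \le 2n-1-j$), so $\mu$ and $\sigma$ agree at both endpoints. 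Hence the last $n$ entries of $\psi_\mu$ and $\psi_\sigma$ coincide position by position.

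Let $I^{*} = \{e^0_\sigma(2n-1), e^1_\sigma(2n-1), \ldots, e^{r-1}_\sigma(2n-1)\}$ be the $r$-interval occupying the last $r$ positions of $\psi_\sigma$. Since $\sigma$ is centered at $e = e^0_\sigma(2n-1)$, we have $I^{*} \in \mathcal{A}_\sigma \subseteq \mathcal{A}$; by the previous step $I^{*}$ is also an $r$-interval of $\psi_\mu$, so $I^{*} \in \mathcal{A}_\mu$. Since $\mu$ is saturated, Katona's argument tells us $\mathcal{A}_\mu$ is a star centered at some edge $e'$, and $e' \in I^{*}$ gives $e' = e^t_\sigma(2n-1)$ for some $t \in \{0,1,\ldots,r-1\}$.

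The crux is ruling out $t \ge 1$. Suppose for contradiction $t \ge 1$ and consider $J = \{e^1_\sigma(2n-1), e^2_\sigma(2n-1), \ldots, e^r_\sigma(2n-1)\}$, which is well-defined since $r \le n-1$. As $J$ lies inside the common part $\tilde{M}_\sigma(2n-1) = \tilde{M}_\mu(2n-1)$, it is an $r$-interval of both $\psi_\sigma$ and $\psi_\mu$; it contains $e' = e^t_\sigma(2n-1)$ (since $t \in [1,r-1] \subseteq [1,r]$) but avoids $e$. Because any fixed position of a cyclic order lies in exactly $r$ intervals of length $r$, the equality case of Katona forces $\mathcal{A}_\mu$ to be precisely the $r$ intervals of $\psi_\mu$ through $e'$, so in particular $J \in \mathcal{A}_\mu \subseteq \mathcal{A}$. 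But $J$ is also compatible with $\sigma$, giving $J \in \mathcal{A}_\sigma$, which contradicts the fact that every element of $\mathcal{A}_\sigma$ contains $e$. Hence $t = 0$ and $\mu$ is centered at $e$.

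I do not foresee a serious obstacle beyond the invariance bookkeeping in the first step. Once $\tilde{M}_\mu(2n-1) = \tilde{M}_\sigma(2n-1)$ is in hand, any $r$-interval sitting inside this common last part is simultaneously visible to $\sigma$ and $\mu$, and the remainder is a short contradiction argument leveraging only Katona's characterisation and the elementary fact that each edge lies in exactly $r$ intervals of length $r$ in a cyclic order.
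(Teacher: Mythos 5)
Your proof is correct and follows essentially the same route as the paper's: both rely on the observation that $R_j$ fixes the ordered part $\tilde{M}(2n-1)$, and then locate the center by comparing the $r$-interval ending at $e^0(2n-1)$ (which lies in $\mathcal{A}_\mu$) with the one shifted by one position (which cannot). The paper states the boundary argument more tersely, while you spell out the equality case of Katona explicitly; the content is the same.
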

\begin{proof}
Without loss of generality, let $\sigma$ be the identity permutation, so $\sigma$ is centered at $\{2n-1,2n\}$. For $1\leq j\leq n-1$, let $\mu=R_{j}(\sigma)$. In this case, $\tilde{M}_{\sigma}(2n-1)=\{e^{n-1}_{\sigma}(2n-1),\ldots,e^0_{\sigma}(2n-1)\}$. In particular, for $1\leq j\leq n-1$, $e^j_{\sigma}(2n-1)=\{j,2n-1-j\}$. It is trivial to note that $e^j_{\mu}(2n-1)=e^j_{\sigma}(2n-1)$. In particular, this implies that $\{e^r_{\mu}(2n-1),\ldots,e^1_{\mu}(2n-1)\}\notin \mathcal{A}_{\mu}$ and $\{e^{r-1}_{\mu}(2n-1),\ldots,e^0_{\mu}(2n-1)\}\in \mathcal{A}_{\mu}$. As $\mu$ is saturated, this means that it must be centered at $\{2n-1,2n\}$.
\end{proof}
We now proceed to prove the following lemma about the $T_j$ operations.
\begin{lemma}\label{transpositions}
If every permutation $\mu\in S_{2n}$ is saturated, then for $\sigma\in S_{2n}$ and $1\leq j\leq 2n-1$, if $\sigma$ is centered at edge $e=\{\sigma(c),\sigma(2n)\}\in E$ for some $c\in [2n-1]$, then the permutation $T_j(\sigma)$ is also centered at $e$.
\end{lemma}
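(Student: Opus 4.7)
My plan is to reduce the lemma to Lemma \ref{swaps} via Claim \ref{obs}. By Claim \ref{obs}, rotations $\pi_c$ preserve both $\mathcal{A}_\sigma$ and the centering edge, so after replacing $\sigma$ with $\pi_c(\sigma)$ I may assume without loss of generality that $\sigma$ is centered at $e = \{\sigma(2n-1), \sigma(2n)\}$, i.e., the centering edge occupies the root slot of the Baranyai partition. Under this normalization, the case $j = n - 1$ is immediate: $T_{n-1}$ coincides with $R_{n-1}$ and Lemma \ref{swaps} applies directly.

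For a general $j \in [2n-2]$, I plan to use the identity
$$T_j(\sigma) = \pi_{-k}\bigl(R_{n-1}(\pi_k(\sigma))\bigr), \qquad k = j - n + 1 \pmod{2n-1},$$
which can be verified by a routine position-by-position computation. By Claim \ref{obs}, the outer rotation $\pi_{-k}$ preserves the center, reducing the task to showing that $R_{n-1}(\pi_k(\sigma))$ is centered at $e$. Here Lemma \ref{swaps} does not apply verbatim, because the center of $\pi_k(\sigma)$ lies in the Baranyai part indexed by $2n-1-k$ rather than the root slot. To remedy this, I would establish a mild generalization of Lemma \ref{swaps}: if $\sigma$ is centered at $\{\sigma(i_0),\sigma(2n)\}$ for an arbitrary $i_0 \in [2n-1]$, then swapping positions $i_0 + l$ and $i_0 - l \pmod{2n-1}$ preserves the center, for every $l \in [n-1]$. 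The proof follows that of Lemma \ref{swaps} nearly verbatim: such a reflection fixes every edge of the part $\tilde{M}_\sigma(i_0)$, which contains the center edge $e^0_\sigma(i_0)$, so the two extremal $r$-intervals of $\psi_\sigma$ ending or starting at $e$ remain $r$-matchings of $\psi$ after the swap. The main bookkeeping is to match the adjacent swap obtained from this generalization (with $l = n-1$) to the $R_{n-1}$ appearing in the decomposition, which I expect to require an additional rotation step.

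The principal technical obstacle is the case $j = 2n-1$, which interchanges the root $\sigma(2n)$ with $\sigma(2n-1)$. Because rotations and reflections both fix the root, $T_{2n-1}$ cannot be expressed as any composition of these operations, and the strategy above is not available. To handle this case I would argue directly: after normalization $e$ is symmetric in its two endpoints, so $T_{2n-1}(\sigma)$ essentially relabels which endpoint of $e$ plays the role of the partition root, and a careful comparison of $\psi_\sigma$ and $\psi_{T_{2n-1}(\sigma)}$ should produce two matchings in $\mathcal{A}_\sigma \cap \mathcal{A}_{T_{2n-1}(\sigma)}$ whose only common edge is $e$. Since $T_{2n-1}(\sigma)$ is saturated by hypothesis, its center edge must lie in every matching of $\mathcal{A}_{T_{2n-1}(\sigma)}$, and hence must equal $e$.
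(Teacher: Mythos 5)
Your decomposition $T_j(\sigma)=\pi_{-k}\bigl(R_{n-1}(\pi_k(\sigma))\bigr)$ with $k=j-n+1$ is correct, and your generalized version of Lemma~\ref{swaps} (reflecting about an arbitrary position $i_0$ preserves the center when the center sits in $\tilde{M}_\sigma(i_0)$) is also sound. The gap is that these two ingredients do not fit together. The swap $R_{n-1}$ of positions $n-1$ and $n$ is precisely the reflection about position $2n-1$ with $l=n-1$; it therefore fixes the part $\tilde{M}(2n-1)$ of \emph{whatever} permutation it is applied to, and no other part. But after rotating by $k$, the center edge of $\pi_k(\sigma)$ lives in $\tilde{M}_{\pi_k(\sigma)}(2n-1-k)$, not $\tilde{M}_{\pi_k(\sigma)}(2n-1)$, so $R_{n-1}$ does not fix the part containing the center and your generalized lemma does not apply. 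Concretely, the generalized reflection about $i_0=2n-1-k$ with $l=n-1$ is the adjacent transposition $T_{i_0+n-1}=T_{n-1-k}$ of $\pi_k(\sigma)$, and conjugating that back by $\pi_{-k}$ returns exactly $T_{n-1}(\sigma)$, independently of $k$. Rotation conjugates adjacent swaps to adjacent swaps without moving the root part, so no choice of ``additional rotation step'' can produce $T_{n-1}$ of a rotated permutation whose center lies off the root part; the approach only ever recovers the case $j=n-1$.

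This is not a bookkeeping issue but a structural one: adjacent transpositions $T_j$ with $j\neq n-1$ genuinely move edges of $\tilde{M}(2n-1)$, which is precisely why the paper needs new arguments for them. For $j\in[n]$ the paper constructs, for each such $j$, an $(n-1)$-interval $I_j$ compatible with both $\sigma$ and $T_j(\sigma)$ and uses the intersecting property of $\mathcal{A}$ directly to rule out any center other than $e$. For $j\in[n+1,2n-2]$ it writes $T_j$ as a five-fold composition of reflections and a small adjacent swap, which is where the hypothesis that \emph{every} permutation is saturated is actually used. Your final remark about $j=2n-1$, while correct in spirit, is also more delicate than needed: since $T_{2n-1}$ fixes $\tilde{M}(2n-1)$ as an (ordered) set of edges, the two extremal $r$-intervals around $e$ are unchanged and the usual Katona argument applies immediately, which is what the paper means by ``follows trivially.''
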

We begin by observing that we may assume, without loss of generality (relabeling if necessary), that $\sigma$ is the identity permutation and is centered at the edge $\{2n-1,2n\}$. We now split the proof of the lemma into two smaller claims, depending on the value of $j$. The first claim deals with the case $j\in [n]\cup \{2n-1\}$. Note that a slightly weaker hypothesis (assuming saturation only for $T_j(\sigma)$ and not for every permutation in $S_{2n}$) suffices in this case.
\begin{claim}\label{smallj}
For $j\in [n]\cup \{2n-1\}$, if $T_j(\sigma)$ is saturated, it is centered at $e=\{2n-1,2n\}$.
\end{claim}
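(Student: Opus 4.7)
The plan is to run a Katona-style boundary argument in the cyclic order $\psi_\mu$. After the WLOG reduction, the edge $e = \{2n-1, 2n\}$ equals $e^0_\sigma(2n-1)$ and sits at position $p := n(2n-1)$ of $\psi_\sigma$; for every $j \in [n] \cup \{2n-1\}$ one checks $e^0_\mu(2n-1) = \{2n-1, 2n\}$, so $e$ also occupies position $p$ in $\psi_\mu$. Since $\sigma$ is centered at $e$, the family $\mathcal{A}_\sigma$ is exactly the $r$ intervals of $\psi_\sigma$ that cover position $p$; label these $J_0, J_1, \ldots, J_{r-1}$, where $J_k$ ends at $p + k$, so that $J_0 = \{e^{r-1}_\sigma(2n-1), \ldots, e^0_\sigma(2n-1)\}$ and $J_{r-1} = \{e^0_\sigma(2n-1), e^{n-1}_\sigma(1), \ldots, e^{n-r+1}_\sigma(1)\}$.

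The first task is to exhibit an index $k_0 \in [0, r-1]$ for which $J_{k_0}$ remains an $r$-interval of $\psi_\mu$ at its original cyclic position; this is equivalent to the edges $e^i_\mu(2n-1)$ for $i \in [0, r-1-k_0]$ and $e^\ell_\mu(1)$ for $\ell \in [n-k_0, n-1]$ all agreeing with their $\sigma$-counterparts. A short case check on $j$ suffices: for $j = 2n-1$ only $e^0_\mu(1), e^1_\mu(1)$ are perturbed and $\tilde M(2n-1)$ is untouched; for $j = n$ only $e^{n-2}_\mu(2n-1), e^{n-1}_\mu(2n-1)$ are perturbed; for $j = n-1$ only $e^{n-2}_\mu(1), e^{n-1}_\mu(1)$ are perturbed; and for $j \in [1,n-2]$ the perturbations live at $k \in \{j, j+1\}$ in $\tilde M(2n-1)$ and $\ell \in \{j-1, j\}$ in $\tilde M(1)$. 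In each case one picks $k_0$ avoiding these indices on both sides, giving $J_{k_0}^\mu = J_{k_0}$ and hence $J_{k_0} \in \mathcal{A}_\sigma \cap \mathcal{A}_\mu$.

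The second task is to rule out the two $\psi_\mu$-intervals just outside the $J$-window: $A' := \{e^{n-1}_\mu(1), \ldots, e^{n-r}_\mu(1)\}$ ending at $p + r$, and $A'' := \{e^r_\mu(2n-1), \ldots, e^1_\mu(2n-1)\}$ ending at $p - 1$. For this I would use a vertex-sum invariant: under $\sigma = \mathrm{id}$, every edge $e^k(2n-1)$ with $k \geq 1$ has vertex-sum $2n-1$, every edge $e^\ell(1)$ has vertex-sum $2n+1$, and the spoke $\{2n-1, 2n\}$ has vertex-sum $4n-1$. A direct tabulation of the perturbed edges listed above shows that every edge modified by any allowed $T_j$ has vertex-sum in $\{2n-2, 2n, 2n+2\}$, which is disjoint from $\{2n-1, 2n+1, 4n-1\}$. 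Hence no edge of $A'$ can coincide with any edge of $J_0$, and no edge of $A''$ can coincide with any edge of $J_{r-1}$; combined with $J_0, J_{r-1} \in \mathcal{A}$ and the intersecting property, this forces $A', A'' \notin \mathcal{A}$.

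Finally, saturation of $T_j(\sigma)$ makes $\mathcal{A}_\mu$ a star at some position $p^*$ in $\psi_\mu$. From $J_{k_0} \in \mathcal{A}_\mu$ we get $p^* \in [p - r + 1 + k_0, p + k_0]$, an interval that always contains $p$. If $p^* > p$, then $p + r \in [p^*, p^* + r - 1]$, so $A'$ would lie in the star and hence in $\mathcal{A}$, contradicting step two; symmetrically $p^* < p$ forces $A'' \in \mathcal{A}$, again a contradiction. Hence $p^* = p$ and $\mathcal{A}_\mu$ is centered at $\{2n-1, 2n\}$, as required. The main obstacle is the bookkeeping in the first step---tracking exactly which edges each $T_j$ perturbs and exhibiting a surviving $J_{k_0}$ for each of the four sub-cases---after which the vertex-sum computation in the second step and the boundary deduction in the final step are essentially mechanical.
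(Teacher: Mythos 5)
Your proposal is correct, and it follows the same underlying strategy as the paper: after normalizing $\sigma=\mathrm{id}$, exhibit an $r$-interval of $\psi_\sigma$ through $e=\{2n-1,2n\}$ that survives the transposition and therefore lies in $\mathcal{A}_\sigma\cap\mathcal{A}_\mu$, then use a disjointness argument (driven by the vertex-sum invariant: non-spoke edges of $\tilde{M}(2n-1)$ sum to $2n-1$, edges of $\tilde{M}(1)$ to $2n+1$, the spoke to $4n-1$, while every perturbed edge sums to one of $2n-2,2n,2n+2$) to pin the center at $e$. The difference is organizational rather than conceptual: the paper splits into four sub-cases ($j=2n-1$ trivially, $j=n-1$ via Lemma~\ref{swaps}, $j=n$ via a direct interval comparison, and $1\leq j\leq n-2$ via the auxiliary $(n-1)$-interval $I_j$ and the two disjoint near-perfect matchings $B$ and $C$), whereas you handle all $j\in[n]\cup\{2n-1\}$ in one sweep by explicitly identifying the two boundary intervals $A'$ (ending at $p+r$) and $A''$ (ending at $p-1$), showing via the sum-invariant that $A'\cap J_0=\emptyset$ and $A''\cap J_{r-1}=\emptyset$, and then letting saturation squeeze the star-center to $p$. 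Your version trades the paper's case-by-case bookkeeping for a single bookkeeping step (finding the surviving $J_{k_0}$ for each $j$), and it makes the vertex-sum invariant, which the paper leaves implicit in the $B\cap C=\emptyset$ step, the explicit engine of the argument; both approaches are sound, and your streamlined boundary argument is arguably easier to verify in one pass.
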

\begin{proof}[Proof of Claim \ref{smallj}]
Note that when $j=2n-1$, the claim follows trivially, so let $1\leq j\leq n$. We note that $\tilde{M}_{\sigma}(2n-1)=\{\{n-1,n\},\{n-2,\ldots,n+1\},\ldots,\{1,2n-2\},\{2n-1,2n\}\}$ and $\tilde{M}_{\sigma}(1)=\{\{n,n+1\},\{n-1,n+2\},\ldots,\{2,2n-1\},\{1,2n\}\}$.

If $j=n-1$, then we know that $T_j(\sigma)=R_{j}(\sigma)$, so we are done by Lemma \ref{swaps}. If $j=n$, we have $e_{\mu}^{n-1}(1)=e_{\sigma}^{n-1}(1)=\{n,n+1\}$. This implies that the $r$-intervals that begin in the edges $\{2n-1,2n\}$ and $\{n,n+1\}$ are the same in both $\sigma$ and $\mu$. In particular, as $\mu$ is saturated, this means that it is centered at $\{2n-1,2n\}$. Now for each $1\leq j\leq n-2$, define the interval $I_j$ as follows.
$$I_j=\{e^{j-1}_{\sigma}(2n-1),\ldots,e^0_{\sigma}(2n-1),e^{n-1}_{\sigma}(1),\ldots,e^{j+1}_{\sigma}(1)\}.$$

Note that $I_j$ is an interval of length $n-1$ in $\psi_{\sigma}$. Also, as $j\in e^j_{\sigma}(2n-1)\cap e^{j-1}_{\sigma}(1)$ and $j+1\in e^{j+1}_{\sigma}(2n-1)\cap e^{j}_{\sigma}(1)$, $I_j$ is also compatible with $\mu$. Now there is some $A\in \mathcal{A}_{\sigma}$ such that $A\subseteq I_j$ and hence $\mu$ is centered at some $e'\in I_j$. We show that $e'=e=\{2n-1,2n\}$. Suppose not. Suppose $e'\in \tilde{M}_{\sigma}(1)$ and let $B=\tilde{M}_{\mu}(1)\setminus \{1,2n\}$. Clearly $e'\in B$. Also, as $e^{j-1}_{\sigma}(1)=\{j,2n+1-j\}$ and $e^{j}_{\sigma}(1)=\{j+1,2n-j\}$, we have $\{j+1,2n+1-j\},\{j,2n-j\}\in B$. Now let $C=\tilde{M}_{\sigma}(2n-1)\setminus \{n-1,n\}$. We have $\{j,2n-1-j\},\{j+1,2n-j-2\}\in C$, which implies that $B\cap C=\emptyset$. As there exist $C'\subseteq C$ and $B'\subseteq B$ such that $|C'|=|B'|=r$ and $C',B'\in \mathcal{A}$, this is a contradiction. A similar argument works when $e'\in \tilde{M}_{\sigma}(2n-1)$, and this completes the proof of the claim.
\end{proof}

\begin{claim}\label{bigj}
If every permutation $\mu\in S_{2n}$ is saturated, then for $n+1\leq j\leq 2n-2$, $T_j(\sigma)$ is also centered at $e=\{2n-1,2n\}$.
\end{claim}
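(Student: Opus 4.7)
The strategy is to reduce the claim to the case of permutations in a particular ``good orbit'' via cyclic shifts. Call a permutation $\pi \in S_{2n}$ \emph{good} if $\{\pi(2n-1), \pi(2n)\} = \{2n-1, 2n\}$, so that $e = \{2n-1, 2n\}$ is the ``last edge'' of $\pi$. The preliminary fact I would establish is that every good permutation is centered at $e$.

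For this, consider the transpositions provided by Claim \ref{smallj} and Lemma \ref{swaps}: the $T$'s $\{T_1, \ldots, T_n, T_{2n-1}\}$ swap positions $(1,2), \ldots, (n, n+1), (2n-1, 2n)$ respectively, and the $R$'s $\{R_1, \ldots, R_{n-1}\}$ swap $(1, 2n-2), (2, 2n-3), \ldots, (n-1, n)$. Their transposition graph on the position set $[2n]$ has exactly two connected components, $\{1, \ldots, 2n-2\}$ (the path $1{-}2{-}\cdots{-}(n{+}1)$ from the $T$'s, with positions $2n-2, 2n-3, \ldots, n+2$ attached via $R_1, R_2, \ldots, R_{n-3}$) and $\{2n-1, 2n\}$ (via $T_{2n-1}$), so these transpositions generate precisely the subgroup $G = S_{2n-2} \times S_2$ stabilizing this partition. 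Every good permutation is reachable from $\sigma$ by a sequence of such transpositions; at every intermediate step the current permutation remains good, so Claim \ref{smallj} or Lemma \ref{swaps} applies to preserve centering at $e$. By induction, every good permutation is centered at $e$.

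With this in hand, the plan for Claim \ref{bigj} is to produce, for each $j \in \{n+1, \ldots, 2n-2\}$, a cyclic shift $\mu_c$ of $\mu := T_j(\sigma)$ that is good. Since $\mathcal{A}_\mu = \mathcal{A}_{\mu_c}$ by Claim \ref{obs}, centering of $\mu_c$ at $e$ transfers to $\mu$. When $j \in \{n+1, \ldots, 2n-3\}$, both positions $j$ and $j+1$ lie in $\{1, \ldots, 2n-2\}$, so $T_j$ stabilizes the partition and $\mu$ itself is good; take $c = 0$ and conclude immediately.

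The main obstacle is the boundary case $j = 2n-2$: here $T_{2n-2}$ swaps $2n-2$ (first block) with $2n-1$ (second block), so $\mu(2n-1) = 2n-2$ and $\mu$ is \emph{not} good, putting it outside the reach of the preliminary fact. The crux is to cyclically shift by $c = 2n-2$: tracking indices modulo $2n-1$, one computes $\mu_{2n-2}(1) = \mu(2n-1) = 2n-2$, $\mu_{2n-2}(i) = \mu(i-1) = i-1$ for $2 \le i \le 2n-2$, and $\mu_{2n-2}(2n-1) = \mu(2n-2) = 2n-1$, so $\mu_{2n-2} = (2n-2, 1, 2, \ldots, 2n-3, 2n-1, 2n)$ has $\{\mu_{2n-2}(2n-1), \mu_{2n-2}(2n)\} = \{2n-1, 2n\}$ and is therefore good. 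Applying the preliminary fact to $\mu_{2n-2}$ and then transferring via Claim \ref{obs} completes the argument. The small piece of arithmetic that makes this work---that shifting by exactly $2n-2$ restores the value $2n-1$ to position $2n-1$---is the key insight one must spot.
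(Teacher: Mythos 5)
Your proof is correct, and it takes a genuinely different route from the paper's, though the underlying toolkit (Lemma~\ref{swaps}, Claim~\ref{smallj}, Claim~\ref{obs}) is the same. Where the paper handles the range $n+1 \leq j \leq 2n-3$ by exhibiting the explicit conjugation identity $T_j = R_{j'} R_{j'+1} T_{j'} R_{j'} R_{j'+1}$ with $j' = 2n-2-j$ and then chaining Lemma~\ref{swaps} and Claim~\ref{smallj}, you instead make the structural observation that the transpositions $\{T_1,\dots,T_n,T_{2n-1}\}\cup\{R_1,\dots,R_{n-1}\}$ have a connected transposition graph on positions $\{1,\dots,2n-2\}$ and a separate component on $\{2n-1,2n\}$, hence generate the full stabilizer $S_{2n-2}\times S_2$ of that partition; this shows all ``good'' permutations are centered at $e$ at once, and subsumes the $n+1 \leq j \leq 2n-3$ case as the trivial remark that $T_j(\sigma)$ is already good. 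For $j=2n-2$ both proofs must invoke the cyclic shift from Claim~\ref{obs}, but where the paper first constructs a good permutation $\mu_1 = T_1(T_2(\cdots T_{2n-3}(\sigma)\cdots))$ and then notices $T_{2n-2}(\sigma)$ is a cyclic shift of $\mu_1$, you go the other direction: you directly shift $T_{2n-2}(\sigma)$ by $c=2n-2$ and check that the result is good, then appeal to the preliminary fact. Your orbit-based argument is more conceptual and avoids verifying the explicit five-fold conjugation identity, at the modest cost of invoking the standard fact that transpositions with a connected transposition graph generate the full symmetric group on that component; the paper's version is more elementary and self-contained. One small point worth making explicit if you were to write this up: when you apply Claim~\ref{smallj} or Lemma~\ref{swaps} to an intermediate good permutation $\pi$ (not the identity), you are relying on the relabeling WLOG already present in those statements, which is legitimate precisely because the transpositions under consideration fix the position set $\{2n-1,2n\}$ and hence the edge $\{\pi(2n-1),\pi(2n)\} = \{2n-1,2n\}$ is preserved.
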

\begin{proof}
First, let $n+1\leq j\leq 2n-3$ and let $j'=2n-2-j$. Then $1\leq j'\leq n-2$. It is not hard to observe that $\mu=R_{j'}(R_{j'+1}(T_{j'}(R_{j'}(R_{j'+1}(\sigma)))))$ (see Figure~\ref{fig:big-j}). Now, using Lemma \ref{swaps} and Claim \ref{smallj}, we can conclude that $\mu$ is centered at $\{2n-1,2n\}$.

\begin{figure}[ht]
\begin{center}
\includegraphics[scale=0.85]{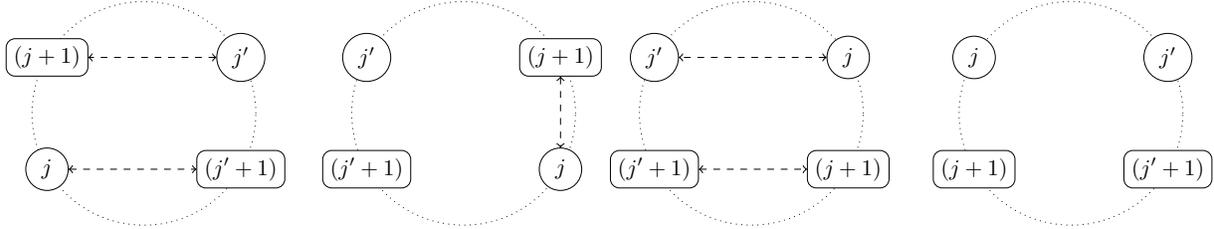}
\end{center}
\caption{A schematic showing the operations $R_{j'}(R_{j'+1}(T_{j'}(R_{j'}(R_{j'+1}(\sigma)))))$.}
\label{fig:big-j}
\end{figure}

Finally, let $j=2n-2$. Now, for $i=2n-3$, let $\mu_i=T_i(\sigma)$ and for each $1\leq i\leq 2n-4$, let $\mu_i=T_i(\mu_{i+1})$. We know, using the result for the case $1\leq j\leq 2n-3$, that $\mu_1$ is centered at $\{2n-1,2n\}$. However, it can be seen that for each $1\leq i\leq 2n-1$, $\mu(i)=\mu_1(i+1)$. Using Claim \ref{obs} with $c = 1$ and $\pi = \mu_1$, this means that $\mathcal{A}_{\mu}=\mathcal{A}_{\mu_1}$, thus implying that $\mu$ is centered at $\{2n-1,2n\}$.
\end{proof}
It is clear that Lemma \ref{transpositions} follows from Claims \ref{smallj} and \ref{bigj}. By Lemma~\ref{relabel}, we know that there is a $\sigma$ that is centered at $e=\{\sigma(c),\sigma(2n)\}$ for some $c\in [2n-1]$. Now using Lemma~\ref{transpositions}, we have that every permutation $S_{2n}$ is centered at $e$. As every matching of size $r$ (in particular, every matching containing the edge $e$) is compatible with some permutation in $S_{2n}$, it follows that $\mathcal{A}=\mathcal{A}_e$, as required.
\\
\\
\textbf{Remark:} A recent graph-theoretic generalization of the Erd\H{o}s--Ko--Rado theorem (see \cite{bh}, \cite{hs}, \cite{hst}, \cite{ht} and \cite{hk} for some examples) defines the Erd\H{o}s--Ko--Rado property for families of independent sets of a graph in the following manner. Let $\mathcal{I}^r(G)$ be the family of all independent sets of size $r$ of a graph $G$, and for a vertex $v\in G$, let $\mathcal{I}^r_v(G)$ be all sets in $\mathcal{I}^r(G)$ containing $v$, called a star in $\mathcal{I}^r(G)$ centered at vertex $v$. We say that $\mathcal{I}^r(G)$ is \textit{EKR} if for any $\mathcal{A}\subseteq \mathcal{I}^r(G)$, if $\mathcal{A}$ is intersecting, then $|\mathcal{A}|\leq \textrm{ max }_{v\in V(G)}|\mathcal{I}^r_v(G)|$. Also, we say that $\mathcal{I}^r(G)$ is \textit{strictly EKR} if it is EKR and every intersecting family of maximum size is a star. Let $K(n,k)$ be the Kneser graph as defined in Section \ref{baranyai}, and let $\overline{K(n,k)}$ be its complement, i.e. $V(\overline{K(n,k)})={[n] \choose k}$ and for any $A,B\in {[n] \choose k}$, there is an edge between $A$ and $B$ if and only if $A\cap B\neq \emptyset$. The following corollary is an immediate consequence of Theorem \ref{mainthm}.
\begin{cor}\label{knesercomp}
$\mathcal{I}^r(\overline{K(2n,2)})$ is strictly EKR for every $r\leq n-1$.
\end{cor}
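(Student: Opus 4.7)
The plan is to observe that this corollary is purely a restatement of Theorem~\ref{mainthm} in the independent-set language, so the whole proof is a dictionary translation.

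First I would unpack the graph $\overline{K(2n,2)}$: its vertex set is $\binom{[2n]}{2} = E(K_{2n})$, and two such vertices $A, B$ are adjacent precisely when $A \cap B \neq \emptyset$, i.e.\ when they share an endpoint in $K_{2n}$. Consequently, an independent set in $\overline{K(2n,2)}$ is exactly a set of pairwise disjoint edges of $K_{2n}$, that is, a matching. Hence $\mathcal{I}^r(\overline{K(2n,2)}) = \mathcal{M}^r_n$, and for any vertex $v = e \in V(\overline{K(2n,2)})$ we have $\mathcal{I}^r_v(\overline{K(2n,2)}) = \mathcal{M}^r_n(e)$. Intersection of independent sets coincides with intersection of matchings, so the notions of ``intersecting family'' agree on both sides.

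Next I would use the edge-transitivity of $K_{2n}$ to note that $|\mathcal{M}^r_n(e)| = \phi(n,r)$ for every edge $e$, and therefore
\[
\max_{v \in V(\overline{K(2n,2)})}|\mathcal{I}^r_v(\overline{K(2n,2)})| \;=\; \phi(n,r).
\]
Finally I would apply Theorem~\ref{mainthm} directly: for any intersecting $\mathcal{A} \subseteq \mathcal{I}^r(\overline{K(2n,2)}) = \mathcal{M}^r_n$ with $r \leq n-1$, we get $|\mathcal{A}| \leq \phi(n,r) = \max_v |\mathcal{I}^r_v(\overline{K(2n,2)})|$, which is the EKR property. The uniqueness clause of Theorem~\ref{mainthm} (equality forces $\mathcal{A} = \mathcal{M}^r_n(e)$ for some edge $e$) is exactly the statement that every intersecting family of maximum size is a star, giving strict EKR.

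There is no genuine obstacle here; the only thing to be careful about is making sure the correspondence between stars in the two settings is stated explicitly so the reader sees why ``maximum-sized intersecting family is a star in $\overline{K(2n,2)}$'' follows from ``extremal intersecting family is $\mathcal{M}^r_n(e)$,'' and confirming that every vertex of $\overline{K(2n,2)}$ gives a star of the same size $\phi(n,r)$ so that the $\max_v$ in the definition of EKR collapses to $\phi(n,r)$.
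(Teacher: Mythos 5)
Your proposal is correct and matches the paper exactly: the paper offers no separate argument, presenting the corollary as an ``immediate consequence'' of Theorem~\ref{mainthm}, and your dictionary translation (independent sets of $\overline{K(2n,2)}$ are matchings, vertex-stars are edge-stars, all stars have size $\phi(n,r)$ by symmetry) is precisely the intended unpacking. Nothing further is needed.
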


\end{document}